\numberwithin{equation}{section}
\newtheorem{theorem}{Theorem}[section]
\newtheorem{definition}[theorem]{Definition}
\newtheorem{lemma}[theorem]{Lemma}
\numberwithin{equation}{section}
\renewcommand{\P}{\mathbb{P}}
\theoremstyle{remark}
\newtheorem{remark}[theorem]{Remark}
\newcommand{\R}{{\mathbb R}}
\def\9{{\infty}}
\def\bbp{{\mathbb{P}}}
\def\({\left(}
\def\){\right)}
\def\<{\left<}
\def\>{\right>}
\newcommand{\one}{\mathbbm{1}}
\newcommand{\imu}{\mathrm{i}}
\newcommand{\dd}{\,\mathrm{d}}
\newcommand{\drm}{\mathrm{d}}
\renewcommand{\epsilon}{\varepsilon}
\renewcommand{\Re}{\operatorname{Re}}
\newcommand{\N}{{\mathbb{N}}}
\newcommand{\PP}{{\mathbb{P}}}
\begin{document}
	
	\title[Regularization by noise for critical NLS]{Regularization by noise for the energy- and mass-critical nonlinear Schr{\"o}dinger equations}
	
	\author{Martin Spitz}
\address[Martin Spitz]{Fakult\"at f\"ur Mathematik,
Universit\"at Bielefeld, D-33501 Bielefeld, Germany}
\email{mspitz@math.uni-bielefeld.de}
\thanks{}

\author{Deng Zhang}
\address[Deng Zhang]{School of Mathematical Sciences, MOE-LSC,
CMA-Shanghai, Shanghai Jiao Tong University, China.}
\email{dzhang@sjtu.edu.cn}
\thanks{}

\author{Zhenqi Zhao}
\address[Zhenqi Zhao]{School of Mathematical Sciences, Shanghai Jiao Tong University, China.}
\email{semimartingale@sjtu.edu.cn}
\thanks{}

	\begin{abstract}
		In this article we prove a regularization by noise phenomenon for the energy-critical and mass-critical nonlinear Schr{\"o}dinger equations. We show that for any deterministic data, the probability that the corresponding solution exists globally and scatters goes to one as the strength of the non-conservative noise goes to infinity. The proof relies on the rescaling transform and a new observation on the rapid uniform decay of geometric Brownian motions after short time.
	\end{abstract}

	\maketitle
	
	\section{Introduction}

	We consider the stochastic nonlinear Schr{\"o}dinger equation (SNLS) 
	\begin{equation} 	\label{eq:StochasticNLS}
		\imu \drm X + \Delta X \drm t = \lambda |X|^{\alpha-1} X \drm t - \imu \mu X \drm t + \imu \sum_{k=1}^\infty X \phi_k\drm \beta_k(t)	
	\end{equation}
	in the energy-critical case $\alpha=1+4/(d-2)$ in dimensions $d \geq 3$ and in the mass-critical case 
	$\alpha = 1+ 4/d$ 
	in dimensions $d\geq 1$. 
	Here $\lambda=1$  or $\lambda=-1$, corresponding to the focusing and defocusing case, respectively. The stochastic term is taken in the sense of It\^o, 
	where $(\beta_k)$ are real-valued Brownian motions on a probability space $(\Omega, \mathscr{F}, \P)$ with normal, in particular right-continuous, filtration $(\mathscr{F}_t)_{t\geq 0}$, and 
	$(\phi_k)$ are complex numbers
	such that 
	\begin{align*} 
		\mu=\frac{1}{2}\sum_{k=1}^\infty
		|\phi_k|^2 <\infty. 
	\end{align*}
	We note that in the conservative case, i.e., in the case where all $\phi_k$ have vanishing real part, the stochastic part $- \imu \mu X \drm t + \imu \sum_{k=1}^\infty X \phi_k\drm \beta_k(t)$ equals the Stratonovich noise $ \imu \sum_{k=1}^\infty X \phi_k \circ \drm \beta_k(t)$, which motivates the form of the stochastic part in~\eqref{eq:StochasticNLS}.
	
	In this work we prove that in the non-conservative case, the probability that solutions to~\eqref{eq:StochasticNLS} exist globally and scatter converges to one as $\|\Re(\phi_k)\|_{\ell^2_k}$ goes to infinity. In the focusing case this is in striking contrast to the deterministic situation where finite-time blow-up solutions exist. Our main results thus show that adding non-conservative noise to the focusing energy- or mass-critical nonlinear Schr{\"o}dinger equations (NLS) improves the behavior of the corresponding solutions (with high probability), a phenomenon generally referred to as regularization by noise.
	
Stochastic nonlinear Schr{\"o}dinger equations are a model for random nonlinear waves, describing, e.g., the propagation of nonlinear dispersive waves in nonhomogeneous or random media. They also appear in the modelling of open quantum systems, thermal fluctuations in molecular aggregates, scattering of excitons in crystals, etc. In this context we note that, for solutions $X$ of~\eqref{eq:StochasticNLS}, the map $t \mapsto \|X(t)\|_2^2$ is a continuous martingale as required in the application to quantum measurement~\cite{BG09}. We refer to, e.g., \cite{BCIRG94, BCIRG95, BG09} for more information on the motivation and physical background of SNLS.

	Local well-posedness of~\eqref{eq:StochasticNLS} is well-known, see \cite{BRZ14, BRZ16, BD99, BD03, BM14}. 
	As in the deterministic case, for general large initial data, the dynamics of SNLS in the defocusing cases differs greatly from the dynamics in the focusing cases. In fact, after extensive research over several decades, it is now known that in the deterministic defocusing critical cases solutions to NLS exist globally and scatter. 
	See~\cite{Bou99, CKSTT08, RV07, V07} for the energy-critical case 
	and \cite{Dod12, Dod16, Dod16.2} for the mass-critical case. 
	Corresponding results have been established in the stochastic defocusing critical cases, see~\cite{FX23, FX21, Z23}. We also refer to~\cite{OO19} for SNLS with additive noise in the defocusing mass- and energy-critical cases.
	
	However, in the focusing case, singularity formation and soliton dynamics occur. 
	More precisely, in the deterministic case, for the energy-critical NLS the existence of finite-time blow-up solutions follows from the classical virial identity~\cite{Gl77}. Further blow-up and soliton dynamics have been established in~\cite{KM06} and~\cite{DM09}. We also refer to~\cite{Sc23} for a recent construction of finite time type II blow-up solutions above the ground state and the references therein for an overview over the known blow-up results in the energy-critical case.
	
	For the mass-critical NLS there exist pseudo-conformal blow-up solutions, which are unique in the class of $H^1$ blow-up solutions~\cite{Mer93}. 
	Furthermore, a solution with critical mass blowing up in finite or infinite time must be a soliton or the pseudo-conformal transformation of a soliton, see~\cite{Dod23, Dod24}. Above the critical mass threshold finite-time blow-up solutions of Bourgain-Wang type exist~\cite{BW97}.
	
	In the stochastic focusing case, it was proved in~\cite{BD02,BD05} that the multiplicative noise can accelerate the blowup of solutions,  
	namely, 	push solutions to blow up at any small time with positive probability.  
	The quantitative construction of  critical mass blow-up solutions to SNLS was performed in~\cite{SZ23}. 
	Afterwards, 	the multi-bubble Bourgain-Wang type blow-up solutions, including both multi blow-up profiles and a regular profile, 
	were constructed in \cite{SZ24}.  
	See also \cite{SZ23.2} for stochastic solitons. 
	
	In this article we prove that the addition of sufficiently strong non-conservative multiplicative noise to the critical SNLS prevents such singular behavior and yields global scattering solutions with high  probability instead. Such \emph{regularization by noise phenomena} have been observed for various stochastic models and are investigated intensively in the field of stochastic partial differential equations (SPDEs). Seminal regularization by noise results include~\cite{KR05} for stochastic differential equations (SDEs), \cite{DFRV16} for SDEs on Hilbert spaces, \cite{FGP10} for transport equations, and~\cite{FL21} for fluid models.
	
	For dispersive equations, it was observed numerically
	that multiplicative noise effects the blow-up and soliton dynamics. 
	For instance, in \cite{DM02, DM02.2},  
	numerical experiments show that conservative noise, which conserves the mass pathwisely, has the effect to delay blowup, 
	while white noise can even prevent blowup. On the other hand, 
	it was shown in~\cite{BRZ17} 
	that in the energy-subcritical case the non-conservative noise has the ability to prevent blowup with high probability. 
	Afterwards, 
	it was proved in~\cite{HRZ19}, still in the energy-subcritical case, that this type of noise even improves the scattering behavior. 
	Recently, such regularization by noise effects have also been observed for the stochastic Zakharov system in dimensions three and four~\cite{HRSZ23, HRSZ24}. 
	
	We note that regularization by noise phenomena were also observed for the nonlinear Schr{\"o}dinger equation with random dispersion. In particular, it was shown that the mass-critical NLS with random dispersion is globally well-posed even for large $L^2$-data in the focusing case, see~\cite{DT11, Ro24} and the references therein. 
	We also refer to the recent work~\cite{BFMZ24} which proves that adding suitable superlinear noise leads to the global well-posedness of NLS with polynomial type nonlinearity for any initial data in $H^s(\mathbb{T}^d)$, $s>d/2$, and thus prevents blowup in finite time.

	As discussed above, scattering from regularization by noise for SNLS was shown in~\cite{HRZ19} in the energy-subcritical case, but the energy-critical case remained open. Our first main result solves this problem, establishing scattering from regularization by noise in the energy-critical case.
	
	Here and in the following we use the notation $\|c\| = \|c\|_{\ell^2}$ for any $c \in \ell^2$.
		
	\begin{theorem}[Noise regularization for energy-critical SNLS]
		\label{thm:RegNoiseEnerCrit}
		Let $d \geq 3$, $\alpha = 1 + \frac{4}{d-2}$, $c_k = \Re(\phi_k)$ for $k \in \N$ such that $c \in \ell^2$. Let $X_0 \in H^1(\R^d)$. Then
		\begin{align*}
			\PP(\{X \text{ exists on } [0,\infty) \text{ and scatters forward in time}\}) \longrightarrow 1 \qquad \text{as } \|c\|  \rightarrow \infty,
		\end{align*}
		where $X$ denotes the unique solution of~\eqref{eq:StochasticNLS} with initial data $X(0) = X_0$ and "scatters forward in time" means that there exists $X_+ \in H^1(\R^d)$ such that
		\begin{align} \label{Noise-scatter-H1}
			\lim_{t \rightarrow \infty} \| e^{-\imu t \Delta} e^{\hat{\mu} t - W(t)} X(t) - X_+ \|_{H^1(\R^d)} = 0,
		\end{align}
		where $\hat{\mu} = \frac{1}{2}\sum_{k=1}^\infty (|\phi_k|^2 + \phi_k^2)$.
	\end{theorem}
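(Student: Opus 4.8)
The plan is to remove the noise by the rescaling transform and then to exploit that, for large $\|c\|$, the resulting random coefficient decays so fast that the nonlinearity becomes a negligible perturbation of the free flow after a short time. I would introduce $W(t) = \sum_{k=1}^\infty \phi_k \beta_k(t)$ and set $v = e^{\hat{\mu} t - W}X$. Writing \eqref{eq:StochasticNLS} in It\^o form and applying It\^o's formula to $v$, the two martingale terms cancel and the contributions of $\mu$ and $\hat{\mu}$ combine, leaving the pathwise random NLS
\begin{equation*}
\imu \p_t v + \Delta v = \la\, g(t)\, |v|^{\alpha-1}v, \qquad v(0) = X_0,
\end{equation*}
where $g(t) = e^{(\alpha-1)\Phi(t)}$ and $\Phi(t) = \Re W(t) - \|c\|^2 t$ (using $\Re\hat{\mu} = \|c\|^2$). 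Since the scattering in \eqref{Noise-scatter-H1} is exactly $\|e^{-\imu t\Delta}v(t) - X_+\|_{H^1}\to0$, the theorem reduces to showing that $v$ exists globally and scatters in $H^1$ with probability tending to one. All randomness now sits in the scalar coefficient $g$, and by Brownian scaling $\Phi(t) \stackrel{d}{=} \|c\|\,\widetilde B(t) - \|c\|^2 t$ for a standard Brownian motion $\widetilde B$, i.e. $g$ is an $(\alpha-1)$-power of a geometric Brownian motion with strong negative drift.

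The second step is a deterministic, pathwise scattering criterion for this equation. Fixing a realization of $g$ and choosing $t_0>0$ small (depending only on $X_0$ and an upper bound $M_0$ for $\sup_{[0,t_0]}g$), the energy-critical local theory—using that the scattering norm of $e^{\imu t\Delta}X_0$ over $[0,t_0]$ tends to $0$ as $t_0\to0$, together with the Strichartz and nonlinear estimates in which $g$ contributes only the harmless factor $\sup g$—produces a solution on $[0,t_0]$ with $\|v(t_0)\|_{H^1} \le R_0 = R_0(X_0,M_0)$. On $[t_0,\infty)$ I would compare $v$ to the free evolution $\widetilde v(t) = e^{\imu(t-t_0)\Delta}v(t_0)$, whose scattering norm is finite and $\lesssim R_0$ by Strichartz. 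The defect of $\widetilde v$ as a solution is $\la g|\widetilde v|^{\alpha-1}\widetilde v$, bounded in the dual Strichartz norm by $\big(\sup_{t\ge t_0}g\big)\,\|\widetilde v\|_{\dot S^1}^{\alpha}\lesssim \big(\sup_{t\ge t_0}g\big)\,R_0^{\alpha}$. Hence, if $\sup_{t\ge t_0}g \le \ep$ with $\ep = \ep(R_0)$ small enough, the long-time perturbation (stability) theory for the energy-critical NLS yields that $v$ exists on all of $[t_0,\infty)$ with finite scattering norm, and therefore scatters in $H^1$. Crucially, this perturbation is taken around the \emph{free} flow, so no deterministic large-data scattering—which fails in the focusing case—is invoked.

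Finally I would supply the probabilistic input, which is the new ingredient. By the scaling above and the law of the maximum of Brownian motion with unit negative drift, $\sup_{t\ge0}\Phi(t) \stackrel{d}{=} \sup_{s\ge0}(\widetilde B(s)-s)$ is exponentially distributed and, crucially, its law does not depend on $\|c\|$; thus $\PP(\sup_{[0,t_0]}g > M_0) \le \PP(\sup_{[0,\infty)}g > M_0)$ can be made $\le\delta/2$ by taking $M_0$ large, uniformly in $\|c\|$. On the other hand, for fixed $t_0>0$, $\sup_{t\ge t_0}\Phi(t) \stackrel{d}{=} \sup_{s\ge \|c\|^2 t_0}(\widetilde B(s)-s) \to -\infty$ in probability as $\|c\|\to\infty$, so $\PP(\sup_{t\ge t_0}g \le \ep)\ge 1-\delta/2$ for $\|c\|$ large. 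Choosing, in order, $M_0$, then $t_0$ and $R_0$, then $\ep$, and intersecting the two events shows $\PP(v \text{ scatters}) \ge 1-\delta$ for all large $\|c\|$; since $\delta>0$ is arbitrary, the probability tends to one. The main obstacle is the deterministic long-time stability argument of the second step: because the problem is energy-critical there is no subcritical smoothing to fall back on, so converting the smallness of $\sup_{[t_0,\infty)}g$ into a global finite scattering norm for the arbitrary (though deterministically controlled) $H^1$-datum $v(t_0)$ requires the full perturbation theory and careful bookkeeping of the $\dot H^1$- and $L^2$-level Strichartz estimates needed for $H^1$-scattering.
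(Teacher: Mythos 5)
Your proposal is correct and follows the same two-step skeleton as the paper: rescale to the random NLS with coefficient $h_c = e^{(\alpha-1)(\sum_k c_k\beta_k(t) - \|c\|^2 t)}$, obtain a uniform-in-$\|c\|$ bound on $\sup_t h_c$ from the time-change $\sum_k c_k\beta_k(t) = \widetilde{B}(\|c\|^2 t)$ (the paper phrases this via self-similarity rather than the exponential law of $\sup_s(\widetilde B(s)-s)$, but it is the same scaling fact), solve on a short initial interval using smallness of the linear profile's Strichartz norm, and then solve globally on the tail where the coefficient is small with high probability (the paper's Lemma~2.2, proved by the law of the iterated logarithm, is exactly your statement that $\sup_{s\ge \|c\|^2 t_0}(\widetilde B(s)-s)\to-\infty$; the paper splits at the $c$-dependent time $\|c\|^{-1}$ while you fix $t_0$, an inessential difference). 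The one genuine divergence is the large-time step: you invoke the long-time perturbation (stability) theory for the energy-critical NLS around the free evolution $e^{\imu(t-t_0)\Delta}v(t_0)$, whereas the paper runs a single direct contraction on $S^1([T,\infty))$ in a ball of radius $2CE$, with smallness supplied by the factor $\|h\|_{L^\infty} \le \varepsilon(E)$ multiplying the nonlinearity, and with the standard trick of contracting in the undifferentiated metric $d(u,v)=\|u-v\|_{S(J)}$ to avoid difference estimates on derivatives. The paper's route is both lighter and safer: because the smallness sits in the coefficient rather than the data or the error term, no iteration over subintervals is needed and the argument works uniformly for all $d\ge 3$, while the stability theory you appeal to is delicate in high dimensions $d>6$ (where it is known to require exotic Strichartz spaces, and the tolerance $\epsilon(R_0)$ degrades exponentially in the free profile's scattering norm); your identification of this step as ``the main obstacle'' is thus apt, and the direct contraction is precisely how the paper sidesteps it.
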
 
	
	We further prove that this regularization by noise phenomenon also occurs in the mass-critical case.
	 
	\begin{theorem}[Noise regularization for   mass-critical SNLS]
		\label{thm:RegNoiseMassCrit}
		Let $d \geq 1$, $\alpha = 1 + \frac{4}{d}$, and $c$ as in Theorem~\ref{thm:RegNoiseEnerCrit}. Let $X_0 \in L^2(\R^d)$. We then have
		\begin{align}  \label{Noise-scatter-L2}
			\PP(\{X \text{ exists on } [0,\infty) \text{ and scatters forward in time}\}) \longrightarrow 1 \qquad \text{as } \|c\| \rightarrow \infty,
		\end{align}
		where $X$ denotes the unique solution of~\eqref{eq:StochasticNLS} with initial data $X(0) = X_0$ and "scatters forward in time" means that there exists $X_+ \in L^2(\R^d)$ such that
		\begin{align*}
			\lim_{t \rightarrow \infty} \| e^{-\imu t \Delta} e^{\hat{\mu} t - W(t)} X(t) - X_+ \|_{L^2(\R^d)} = 0,
		\end{align*}
		where $\hat{\mu}$ is as in 
		Theorem \ref{thm:RegNoiseEnerCrit}.
	\end{theorem}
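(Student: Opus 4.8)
\emph{Rescaling transform.} The plan is to remove the noise by a rescaling transform and then treat the resulting random equation as a small perturbation of the free Schr\"odinger flow. Write $q=\tfrac{2(d+2)}{d}$ for the mass-critical admissible Strichartz exponent, $q'$ for its conjugate, and $\|\cdot\|_{N}$ for the dual Strichartz norm on $[0,\infty)\times\R^d$. Set $W(t)=\sum_k\phi_k\beta_k(t)$ and $Z(t)=e^{W(t)-\hat\mu t}$; It\^o's formula gives $\drm Z=Z\,\drm W-\mu Z\,\drm t$, so that $y:=Z^{-1}X=e^{\hat\mu t-W}X$ solves pathwise
\[
\imu\partial_t y+\Delta y=\lambda\,g(t)\,|y|^{\alpha-1}y,\qquad y(0)=X_0,
\]
with the real, continuous coefficient $g(t)=|Z(t)|^{\alpha-1}=e^{(\alpha-1)(\Re W(t)-\|c\|^2 t)}$, using $\Re\hat\mu=\sum_k c_k^2=\|c\|^2$. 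Since $g$ is real, mass is conserved: $\|y(t)\|_{L^2}=\|X_0\|_{L^2}$. Because $e^{-\imu t\Delta}e^{\hat\mu t-W(t)}X(t)=e^{-\imu t\Delta}y(t)$, the statement \eqref{Noise-scatter-L2} is exactly the assertion that $y$ scatters in $L^2$, and global existence of $y$ gives that of $X$. It therefore suffices to show that, with probability tending to one as $\|c\|\to\infty$, $y$ exists on $[0,\infty)$ with finite global norm $\|y\|_{L^q_{t,x}}$.

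\emph{Perturbation around the free flow.} Let $w=e^{\imu t\Delta}X_0$, the global free solution, so that $L:=\|w\|_{L^q_{t,x}([0,\infty)\times\R^d)}\lesssim\|X_0\|_{L^2}<\infty$ by Strichartz. Regard $w$ as an approximate solution of the $y$-equation with error $e:=-\lambda g|w|^{\alpha-1}w$. The nonlinearity $\lambda g|u|^{\alpha-1}u$ obeys the usual mass-critical Strichartz nonlinear estimates with constants multiplied by $\|g\|_{L^\infty}$, so the standard mass-critical stability/long-time perturbation theory applies with constants depending additionally on $\|g\|_{L^\infty}$ (and, being perturbative, is insensitive to the sign $\lambda=\pm1$): there exist $\varepsilon_0=\varepsilon_0(L,\|g\|_{L^\infty})>0$ and $C=C(L,\|g\|_{L^\infty})$ such that, as $y$ and $w$ share the datum $X_0$,
\[
\|e\|_{N}\le\varepsilon_0\ \Longrightarrow\ y\text{ exists on }[0,\infty)\text{ with }\|y\|_{L^q_{t,x}}\le C .
\]
A finite global norm yields $X_+=X_0-\imu\lambda\int_0^\infty e^{-\imu s\Delta}g|y|^{\alpha-1}y\,\drm s\in L^2$ with $e^{-\imu t\Delta}y(t)\to X_+$, i.e.\ scattering. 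Thus everything reduces to making the event $\{\|g\|_{L^\infty}\le K\}\cap\{\|e\|_{N}\le\varepsilon_0(L,K)\}$ likely.

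\emph{The geometric Brownian motion estimates.} Two facts drive the argument. First, by the Dambis--Dubins--Schwarz time change $\Re W(t)=B(\|c\|^2 t)$, so $\sup_{t\ge0}(\Re W(t)-\|c\|^2 t)=\sup_{u\ge0}(B(u)-u)$, which is $\mathrm{Exp}(2)$-distributed independently of $\|c\|$; hence $\|g\|_{L^\infty}\stackrel{d}{=}e^{(\alpha-1)\mathrm{Exp}(2)}$ and $\PP(\|g\|_{L^\infty}>K)\to0$ as $K\to\infty$. Note that $\|g\|_{L^\infty}$ stays $O(1)$ and does \emph{not} vanish. Second,
\[
\|e\|_{N}^{q'}=\int_0^\infty g(t)^{q'}\,\|w(t)\|_{L^q_x}^{q}\,\drm t=:I\longrightarrow 0\quad\text{in probability as }\|c\|\to\infty .
\]
Indeed $t\mapsto\|w(t)\|_{L^q_x}^q$ lies in $L^1(0,\infty)$ with integral $L^q$, while for each fixed $t>0$ one has $\Re W(t)-\|c\|^2 t\to-\infty$ in probability, so $g(t)\to0$; truncating at $\{\|g\|_{L^\infty}\le M\}$ and dominating by $M^{q'}\|w(t)\|_{L^q_x}^q$ gives $I\to0$ in probability for every $d\ge1$. (When $d\ge3$ one has $(\alpha-1)q'<2$, and one may instead compute $\mathbb{E}\,I=\int_0^\infty e^{((\alpha-1)^2(q')^2/2-(\alpha-1)q')\|c\|^2 t}\|w(t)\|_{L^q_x}^q\,\drm t\to0$ directly.)

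\emph{Conclusion and the main difficulty.} Fix $\eta>0$, choose $K$ with $\PP(\|g\|_{L^\infty}>K)<\eta/2$ (possible since the law of $\|g\|_{L^\infty}$ is fixed), and then use $I\to0$ in probability to obtain $\PP(\|e\|_{N}>\varepsilon_0(L,K))<\eta/2$ for all large $\|c\|$. On the intersection, of probability at least $1-\eta$, the stability step yields a global, scattering $y$, hence $X$ exists globally and scatters in the sense of \eqref{Noise-scatter-L2}; letting $\|c\|\to\infty$ and then $\eta\to0$ proves the theorem. The main obstacle is establishing $I\to0$ together with the right way of spending it: because $\|g\|_{L^\infty}$ never becomes small, one cannot bound the nonlinearity by a small $L^\infty_t$-factor, and the gain must instead come from integrating the geometric Brownian motion against the free Strichartz density $\|w(t)\|_{L^q_x}^q$, exploiting both the pointwise decay of $g$ (the drift $-\|c\|^2 t$ overwhelms $\Re W$ after time $\sim\|c\|^{-2}$) and the concentration of $g$ on a short initial interval on which $\|w\|_{L^q_{t,x}}$ is small. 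The non-integrability of $\|g\|_{L^\infty}^{q'}$ for $d\le2$ is precisely what forces the truncation argument in place of a moment computation.
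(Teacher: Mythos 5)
Your proposal is correct, and it reaches the theorem by a genuinely different route than the paper. The rescaling step and the probabilistic core coincide: your Dambis--Dubins--Schwarz time change is exactly the device behind the paper's Lemma~\ref{lem:GeomBrDec}, and your observation that $\sup_{u\ge 0}(B(u)-u)\sim\mathrm{Exp}(2)$, so that the law of $\|g\|_{L^\infty}$ is independent of $\|c\|$, is a sharper form of the paper's scaling argument for $\|h_c\|_{L^\infty([0,\infty))}$. The analytic part diverges. The paper (Lemma~\ref{lem:MassCritDetPart}) runs two elementary contraction mappings split at $t=\|c\|^{-1}$: a small-data contraction on $[0,\|c\|^{-1}]$ using smallness of $\|e^{\imu t\Delta}X_0\|_{S([0,\|c\|^{-1}])}$ by dominated convergence, then a small-coefficient global contraction on $[\|c\|^{-1},\infty)$ fed by the uniform decay of $h_c$ there (law of the iterated logarithm). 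You instead treat the free solution $w=e^{\imu t\Delta}X_0$ as an approximate solution and invoke the mass-critical long-time perturbation theorem, reducing everything to $I=\int_0^\infty g(t)^{q'}\|w(t)\|_{L^q_x}^q\,\drm t\to 0$ in probability, proved by truncation on $\{\|g\|_{L^\infty}\le M\}$ (or, for $d\ge 3$, the explicit moment computation using $(\alpha-1)q'<2$; your exponent arithmetic checks out, and the restriction to $d\ge3$ is genuine). The quantity $I$ elegantly packages in one integral the two mechanisms the paper separates --- smallness of the free Strichartz density near $t=0$ and decay of the geometric Brownian motion afterwards --- with no explicit split time. What the paper's route buys is self-containedness: nothing beyond Strichartz is needed, and the same scheme transfers verbatim to the energy-critical case of Section~\ref{Sec-Energy}. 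What your route buys is brevity modulo a black box: the stability theorem (Tao--Visan--Zhang type) is standard, but its extension to the nonlinearity $\lambda g(t)|u|^{4/d}u$ with bounded real $g$ and constants depending on $\|g\|_{L^\infty}$ is asserted rather than cited; the adaptation is routine, since $g$ only multiplies the usual pointwise difference estimates, but you should say so. You should also make the truncation explicit: with $I_M:=\int_0^\infty (g(t)\wedge M)^{q'}\|w(t)\|_{L^q_x}^q\,\drm t$ one has $\mathbb{E}\, I_M\to 0$ by dominated convergence, $I=I_M$ on $\{\|g\|_{L^\infty}\le M\}$, and $\PP(\|g\|_{L^\infty}>M)$ is small uniformly in $c$, which together give $I\to0$ in probability.
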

	
	We mention that the analogous result in the mass-subcritical case, although not treated in the literature,	can be proved as Theorem 1.7 of~\cite{HRZ19}. Therefore, we focus on the new contribution which is the mass-critical case. 
	
	\begin{remark}
		We point out that Theorems \ref{thm:RegNoiseEnerCrit} and~\ref{thm:RegNoiseMassCrit} 
		hold for any initial data, in particular for initial data  for which deterministic solutions blow up or exhibit soliton dynamics in the focusing case. This unveils the regularization effect of noise on the long time dynamics of NLS. 
	\end{remark} 
	
	\begin{remark}
	One can also consider SNLS~\eqref{eq:StochasticNLS} with additional	temporal functions $(g_k(t))$ in the noise as in~\cite{HRZ19} and derive the noise regularization effect on blowup and scattering by arguing in an analogous manner	as in the present work. For the clarity of exposition, we refrained from doing so.
	\end{remark}

	\begin{remark}
		Our proof relies on a new observation for geometrical Brownian motions. Applying the rescaling transform, the stochastic equation~\eqref{eq:StochasticNLS} is equivalent to a random NLS with a geometric Brownian motion in the nonlinearity, see~\eqref{eq:RandomNLS} below. In the subcritical case, using Strichartz estimates, one can estimate the geometric Brownian motion in $L^\theta_t$ for some $\theta < \infty$ and gain smallness from this factor, which is the crucial ingredient in the proof there. This strategy fails in the critical case.
		
		Instead we observe that the geometrical Brownian motion decays uniformly and rapidly after the small time $\|c\|^{-1}$ with high probability, 
		where $\|c\|^2$ is exactly the quadratic variation of the noise $\sum_{k=1}^\infty c_k \beta_k$ in the geometric Brownian motion \eqref{eq:DefGeomBr} below at unit time. This allows us to construct global and scattering solutions in a two step procedure: Before the time  $\|c\|^{-1}$ we use smallness of the linear profile in a dispersive norm to construct a solution on $[0, \|c\|^{-1}]$ with a \emph{deterministic} bound on the size of the solution provided $\|c\|$ is large enough. In a second step we then solve the equation on $[\|c\|^{-1}, \infty)$, exploiting scaling properties of Brownian motion and the fast uniform decay of the geometrical Brownian motion after the small time  $\|c\|^{-1}$.
		
		We refer to Sections~\ref{Sec-Energy} and~\ref{Sec-Mass} for more detailed explanations of the proof.
	\end{remark}

	\section{Probabilistic tools}
	\label{sec:Rescaling}
	
	We first give the definition of a solution of~\eqref{eq:StochasticNLS}. As we are studying $H^1$-solutions in the energy critical case and $L^2$-solutions in the mass critical case, we set $s(\alpha) = 1$ if $\alpha = 1 + \frac{4}{d-2}$ and $s(\alpha) = 0$ if $\alpha = 1 + \frac{4}{d}$ to give the definition for both cases simultaneously. Moreover, we make the convention $H^0(\R^d) = L^2(\R^d)$.
	
	\begin{definition}
		\label{def:SolSNLS}
		Let $d \geq 1$ and $\alpha = 1 + \frac{4}{d}$ or $d \geq 3$ and $\alpha = 1 + \frac{4}{d-2}$. Fix $T\in (0,\infty)$. We say that $X$ is a probabilistic strong solution of~\eqref{eq:StochasticNLS} on $[0,\tau]$,
where $\tau \in (0,T]$ is an $\{\mathscr{F}_t\}$-stopping time,
if $X$ is an $H^{s(\alpha)}$-valued
$\{\mathscr{F}_t\}$-adapted process which belongs to $C([0,\tau],H^{s(\alpha)})$
and satisfies $\mathbb{P}$-a.s.
for any $t\in [0,\tau]$,
\begin{equation}   \label{eq:SNLSDef}
	 X(t) = \int_0^t \imu \Delta X \dd s - \imu \int_0^t \lambda |X|^{\alpha - 1} X \dd s - \int_0^t \mu X \dd s + \sum_{k = 1}^\infty \int_0^t X \phi_k \dd \beta_k(s),  
\end{equation}
as equations in $H^{-2 + s(\alpha)}(\R^d)$.

Given an $\{\mathscr{F}_t\}$-stopping time $\tau^*$, we also call $X$ a probabilistic strong solution of~\eqref{eq:StochasticNLS}
on $[0,\tau^*)$ if $X$ is an $\{\mathscr{F}_t\}$-adapted process belonging to $C([0,\tau^*),H^{s(\alpha)})$ such that for any $T \in (0,\infty)$ and any $\{\mathscr{F}_t\}$-stopping time $\tau < \tau^*$, $X$ is a probabilistic strong solution of~\eqref{eq:StochasticNLS} on $[0,\tau \wedge T]$.
	\end{definition}	
	
	We first use the rescaling or Doss-Sussmann type transformation  
	\begin{align}
		u(t) = e^{\widehat{\mu} t - W(t)} X(t), 
	\end{align}  
	where $\hat{\mu} = \frac{1}{2}(\sum_{k=1}^\infty |\phi_k|^2 + \phi_k^2)$ 
	is as in Theorem \ref{thm:RegNoiseEnerCrit} and we set $W(t) = \sum_{k = 1}^\infty \phi_k \beta_k(t)$, 
	to reduce the stochastic equation \eqref{eq:StochasticNLS} 
	to the random NLS 
	\begin{equation}
		\label{eq:RandomNLS}
		\imu \partial_t u + \Delta u = \lambda h_c |u|^{\alpha - 1} u,
	\end{equation}
	where $h_c$ is the geometric Brownian motion
	\begin{align}
		\label{eq:DefGeomBr}
		h_c = e^{(\alpha - 1)(\sum_{k=1}^\infty c_k \beta_k(t) - (\sum_{k=1}^\infty c_k^2 ) t)}
	\end{align}
	with $c_k = \Re(\phi_k)$. 
	
	A heuristic application of It{\^o}'s product formula indicates that one can transform the stochastic equation~\eqref{eq:StochasticNLS} into the random NLS~\eqref{eq:RandomNLS}, see Section~3 in~\cite{BRZ14}. The rigorous proofs of the equivalence of~\eqref{eq:StochasticNLS} and~\eqref{eq:RandomNLS} are given in Lemma~6.1 in~\cite{BRZ14} and Lemma~2.4 in~\cite{BRZ16}. Consequently, one only needs to prove Theorems~\ref{thm:RegNoiseEnerCrit} and~\ref{thm:RegNoiseMassCrit} for the transformed random NLS~\eqref{eq:RandomNLS}.

	One key observation employed in these proofs is that the geometric Brownian motion $h_c$ has rapid uniform decay after the small time $\|c\|^{-1}$ as explained in the following lemma.
	
	\begin{lemma}
		\label{lem:GeomBrDec}
		Let $h_c$ denote the geometric Brownian motion as in \eqref{eq:DefGeomBr} 
		and let $c=(c_1, c_2, \ldots) \in \ell^2$. 
		Then, for any $\varepsilon>0$, we have 
		\begin{align*}
			\PP(\{\|h_c\|_{L^\infty_t([\|c\|^{-1},\infty))}>\varepsilon\})\longrightarrow 0 \qquad \text{as}\  \|c\| \rightarrow \infty.
		\end{align*}
	\end{lemma}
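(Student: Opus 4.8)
The plan is to reduce the statement to a one-dimensional question about a drifted Brownian motion and then to exploit that, after the restart time $\|c\|^{-1}$, the drift forces the exponent to be very negative with overwhelming probability. Write $\sigma = \|c\|$ and let $\beta(t) = \sum_{k=1}^\infty c_k \beta_k(t)$, a continuous martingale with quadratic variation $\langle \beta\rangle_t = \sigma^2 t$, as already noted in the paper. Since $\alpha - 1 > 0$, the map $x \mapsto e^{(\alpha-1)x}$ is increasing, and $h_c > 0$, so that $\|h_c\|_{L^\infty_t([\sigma^{-1},\infty))} = \exp\big((\alpha-1)\sup_{t\geq\sigma^{-1}}(\beta(t) - \sigma^2 t)\big)$. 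Moreover $\varepsilon\mapsto \PP(\{\|h_c\|_{L^\infty_t([\sigma^{-1},\infty))} > \varepsilon\})$ is non-increasing, so it suffices to treat $\varepsilon\in(0,1)$; setting $a = -\log\varepsilon/(\alpha-1) > 0$, the claim becomes
\[ \PP\Big( \sup_{t\geq\sigma^{-1}}\big(\beta(t) - \sigma^2 t\big) > -a \Big) \longrightarrow 0 \qquad \text{as } \sigma\to\infty. \]

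Next I would perform a \emph{restart} at the deterministic time $\sigma^{-1}$ using the Markov property. Writing $t = \sigma^{-1} + r$ and $\beta'(r) := \beta(\sigma^{-1}+r) - \beta(\sigma^{-1})$, and using $\sigma^2\cdot\sigma^{-1} = \sigma$, one gets $\beta(t) - \sigma^2 t = (\beta(\sigma^{-1}) - \sigma) + (\beta'(r) - \sigma^2 r)$. Taking the supremum over $r\geq0$ yields the decomposition
\[ \sup_{t\geq\sigma^{-1}}\big(\beta(t) - \sigma^2 t\big) = \big(\beta(\sigma^{-1}) - \sigma\big) + M, \qquad M := \sup_{r\geq0}\big(\beta'(r) - \sigma^2 r\big), \]
where $M\geq0$ is independent of $\mathscr{F}_{\sigma^{-1}}$, and $\beta(\sigma^{-1})$ is centered Gaussian with variance $\sigma^2\sigma^{-1} = \sigma$. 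The role of the decomposition is that the deterministic shift $-\sigma$ in the Gaussian term tends to $-\infty$, which is the source of the decay as $\sigma\to\infty$.

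The main work, and the step I expect to be the crux, is the tail bound on the all-time maximum $M$ of the drifted martingale $\beta'(r) - \sigma^2 r$. Here I would use the exponential martingale: since $\langle\beta'\rangle_r = \sigma^2 r$, It\^o's formula shows that $Y(r) := \exp\big(2(\beta'(r) - \sigma^2 r)\big)$ is a nonnegative martingale with $Y(0) = 1$, because the drift rate $2\sigma^2$ exactly cancels the It\^o correction $\tfrac12\cdot 4\langle\beta'\rangle_r$. Optional stopping at the hitting time of a level $b>0$, together with $\beta'(r) - \sigma^2 r\to-\infty$ almost surely, then gives $\PP(M\geq b) = e^{-2b}$, so $M$ is exponentially distributed. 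With this in hand a union bound finishes the proof: for $\sigma > a$,
\[ \PP\big((\beta(\sigma^{-1}) - \sigma) + M > -a\big) \leq \PP\Big(\beta(\sigma^{-1}) > \tfrac{\sigma - a}{2}\Big) + \PP\Big(M > \tfrac{\sigma-a}{2}\Big). \]
The second term equals $e^{-(\sigma - a)}$, while the first is a Gaussian tail $\PP\big(\mathcal N(0,1) > (\sigma-a)/(2\sqrt\sigma)\big) \leq \exp\big(-(\sigma-a)^2/(8\sigma)\big)$; both tend to $0$ as $\sigma\to\infty$. This yields the claim for every $\varepsilon\in(0,1)$, and hence, by the monotonicity reduction above, the lemma.
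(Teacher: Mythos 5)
Your proof is correct, and it reaches the conclusion by a genuinely different route than the paper. Both arguments begin with the same reduction: the noise $\beta(t)=\sum_k c_k\beta_k(t)$ is a continuous Gaussian martingale with $\langle\beta\rangle_t=\|c\|^2t$, so the event only concerns $\sup_{t\geq\|c\|^{-1}}(\beta(t)-\|c\|^2t)$, and the choice of restart time $\|c\|^{-1}$ is exactly what makes the drift contribution $\|c\|^2\cdot\|c\|^{-1}=\|c\|$ blow up. The paper then proceeds \emph{softly}: it invokes the Dambis--Dubins--Schwarz time change to write $\beta(t)=\widetilde B(\|c\|^2t)$ for a single standard Brownian motion $\widetilde B$, so the probability becomes $\PP\big(\sup_{s\geq\|c\|}(\widetilde B(s)-s)>-a\big)$, which tends to zero by the law of the iterated logarithm (indeed $\widetilde B(s)-s\to-\infty$ a.s.) together with monotone convergence of the nested events; no rate is produced or needed. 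You instead restart at $\|c\|^{-1}$, split the supremum into the Gaussian marginal $\beta(\|c\|^{-1})-\|c\|$ plus an independent all-time maximum $M$ of a drifted scaled Brownian motion, identify the law of $M$ exactly as exponential of rate $2$ (note the rate $2\mu/\mathrm{Var}=2\sigma^2/\sigma^2$ is conveniently independent of $\sigma$) via the exponential martingale $e^{2(\beta'(r)-\sigma^2r)}$ and optional stopping, and finish with a union bound. All steps check out: the restart is justified since $\beta/\|c\|$ is a standard Brownian motion for the given filtration (Lévy's characterization, or directly from independence of the $\beta_k$), your exponential martingale is a true martingale (Novikov is trivial as $\langle\beta'\rangle$ is deterministic; even the supermartingale inequality $\PP(M\geq b)\leq e^{-2b}$ would suffice), the union bound needs no independence, and the monotonicity reduction to $\varepsilon\in(0,1)$ is fine. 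What your approach buys is a \emph{quantitative} bound, $\PP(\|h_c\|_{L^\infty([\|c\|^{-1},\infty))}>\varepsilon)\leq e^{-(\|c\|-a)}+e^{-(\|c\|-a)^2/(8\|c\|)}$ with $a=-\log\varepsilon/(\alpha-1)$, i.e., an explicit exponential rate in $\|c\|$ that could be fed into quantitative versions of Theorems~\ref{thm:RegNoiseEnerCrit} and~\ref{thm:RegNoiseMassCrit}; what the paper's argument buys is brevity, since the time change plus the LIL (which the paper cites anyway) settles the claim in a few lines without any tail computation.
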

	
	\begin{proof} 
		We first note that 
		$M(t):= \sum_{k=1}^\infty c_k \beta_k(t)$, $t\geq 0$, 
		is a continuous martingale  
		and has the quadratic variation 
		$\<M\>(t) = (\sum_{k=1}^\infty c_k^2) t$.  
		Hence, 
		by the time-change for martingales (see, e.g. 
		\cite[Theorem 19.4]{Ka21}), 
		there exists a Brownian motion $\widetilde{B}$ 
		such that 
		$M(t) = \widetilde{B}\circ \<M\>(t)$, 
		$t\geq 0$, $\mathbb{P}$-a.s.
		
		This identity yields that  
		\begin{align} \label{h-prob}
			\PP(\{\|h_c\|_{L^\infty_t([\|c\|^{-1},\infty))}>\varepsilon\})
			&= \PP\bigg(\bigg\{\|e^{(\alpha -1)(\sum_{k=1}^{\infty} c_k\beta_k(t) - \|c\|^2 t) }\|_{L^\infty_t([\|c\|^{-1},\infty))}>\varepsilon\bigg\}\bigg)\nonumber \\
			&= \PP\bigg(\bigg\{\|e^{(\alpha -1)(\tilde{B}(\|c\|^2 t) - \|c\|^2 t) }\|_{L^\infty_t([\|c\|^{-1},\infty))}>\varepsilon\bigg\}\bigg)\nonumber \\
			&=\PP\bigg(\bigg\{\|e^{(\alpha - 1)(\widetilde{B}(t) 
				- t)}\|_{L^\infty_t([\|c\|,\infty))}>
			\varepsilon 
			\bigg\}\bigg). 
		\end{align} 
		
		In view of the law of the iterated logarithm  
		\begin{align}\label{eq:LawItLog}
			\limsup\limits_{t\to \infty} \frac{\widetilde{B}(t)}{\sqrt{2t \log\log t}} =1, \ \
			\liminf\limits_{t\to \infty} \frac{\widetilde{B}(t)}{\sqrt{2t \log\log t}} = -1, \ \ \bbp\text{-a.s.}, 
		\end{align} 
		we thus infer that the right-hand side of \eqref{h-prob} converges to zero as $\|c\| \rightarrow \infty$. 
	\end{proof}

	\section{The energy-critical case}
	\label{Sec-Energy}
	
	We first treat the random NLS~\eqref{eq:RandomNLS} in the energy-critical case  $\alpha= 1+4/(d-2)$, where $d\geq 3$. 
	More generally, we consider the equation 
	\begin{equation} \label{hNLS-Energy}
		\imu \partial_t u + \Delta u = \lambda h |u|^{\frac{4}{d-2}} u,
	\end{equation}
	where $h$ is a continuous function on $\R^+ := [0,\infty)$. 
	
	We say that a pair $(q,p)$ is Schr{\"o}dinger admissible if
	\begin{align*}
		\frac{2}{q} + \frac{d}{p} = \frac{d}{2}, 
		\quad 
		(d,q,p) \neq (2,2,\infty).
	\end{align*}

	\begin{lemma}[Strichartz estimates \cite{KT98}]
		\label{lem:StrichartzEstimates}
		Let $I \subseteq \R$ be an interval and $t_0 \in I$. Let $(q,p)$ and $(\tilde{q}, \tilde{p})$ be Schr{\"o}dinger admissible pairs. 
		Then, one has 
		\begin{align*}
			\|e^{\imu(t-t_0)\Delta} f\|_{L^q_I L^p_x} &\lesssim \| f \|_{L^2_x}, \\
			\Big\| \int_{t_0}^t e^{\imu(t-s)\Delta} g(s) \dd s \Big\|_{L^q_I L^p_x} &\lesssim \|g\|_{L^{\tilde{q}'}_I L^{\tilde{p}'}_x}, 
		\end{align*} 
		where the implicit constants are independent of $I$. 
	\end{lemma}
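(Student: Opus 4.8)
The plan is to deduce both estimates from the abstract $TT^*$ machinery, whose only two analytic inputs are the $L^2$ conservation law $\|e^{\imu t \Delta} f\|_{L^2_x} = \|f\|_{L^2_x}$ and the pointwise-in-time dispersive decay
\begin{align*}
    \|e^{\imu t \Delta} f\|_{L^\infty_x} \lesssim |t|^{-d/2} \|f\|_{L^1_x}, \qquad t \neq 0,
\end{align*}
which one reads off from the explicit convolution kernel $(4\pi \imu t)^{-d/2} e^{\imu |x|^2/(4t)}$ of $e^{\imu t \Delta}$. Complex interpolation between these two bounds gives, for every $2 \leq p \leq \infty$,
\begin{align*}
    \|e^{\imu t \Delta} f\|_{L^p_x} \lesssim |t|^{-d(\frac12 - \frac1p)} \|f\|_{L^{p'}_x}, \qquad t \neq 0.
\end{align*}
The admissibility relation $\frac{2}{q} + \frac{d}{p} = \frac{d}{2}$ is exactly the one that makes the subsequent fractional-integration step in the time variable scale invariant, since it forces the decay exponent $d(\frac12-\frac1p)$ to equal $\frac{2}{q}$; thus admissibility is dictated by the method rather than imposed.

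For the homogeneous estimate I would argue by duality. Writing $Tf = e^{\imu t \Delta}f$, the bound $\|Tf\|_{L^q_I L^p_x} \lesssim \|f\|_{L^2_x}$ is equivalent to the boundedness of $TT^*$ from $L^{q'}_I L^{p'}_x$ to $L^q_I L^p_x$, where
\begin{align*}
    TT^* g(t) = \int_{\R} e^{\imu(t-s)\Delta} g(s) \dd s.
\end{align*}
Applying the interpolated dispersive bound in $x$ and then the one-dimensional Hardy--Littlewood--Sobolev inequality in $t$ with kernel $|t-s|^{-2/q}$ yields the claim in all \emph{non-endpoint} cases, i.e. whenever $q > 2$ (so that the fractional-integration exponent lies strictly between $0$ and $1$). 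The inhomogeneous estimate in this range then follows from the untruncated operator together with the Christ--Kiselev lemma, which reinstates the cutoff $s < t$ to produce the retarded operator in the statement; this is legitimate precisely because the time exponents satisfy $\tilde{q}' < q$ strictly when $\tilde{q} > 2$ and $q > 2$.

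The main obstacle is the \emph{endpoint} pair $(q,p) = (2, \frac{2d}{d-2})$, available only for $d \geq 3$ and legitimately excluded when $d=2$ (matching the condition $(d,q,p)\neq(2,2,\infty)$). Here the fractional-integration exponent degenerates to $1$, so both the naive $TT^*$ argument and the Christ--Kiselev reduction break down, and I would follow Keel--Tao directly on the bilinear form $\langle T g, G\rangle$ of the retarded operator. One decomposes dyadically as $T = \sum_{j \in \Z} T_j$, where $T_j$ restricts the convolution to $|t-s| \sim 2^j$, and estimates each $\langle T_j g, G\rangle$ by interpolating the energy and dispersive inputs into the Lorentz spaces $L^{p,2}_x$, whose finer summability is what makes the endpoint exponent accessible. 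Summing the dyadic pieces is the delicate point, since a direct summation loses a logarithmic factor; one instead runs a real-interpolation/atomic argument, organized through an exceptional-set decomposition of the time variables, that upgrades the per-scale bounds to the sharp $L^2_t L^{p,2}_x$ estimate. The stated $L^2_t L^p_x$ bound then follows from the embedding $L^{p,2}_x \hookrightarrow L^p_x$, valid for $p \geq 2$. Independence of all constants from $I$ is automatic throughout, as every kernel, Hardy--Littlewood--Sobolev, and interpolation inequality is translation invariant and may be applied on all of $\R$ before restricting to $I$.
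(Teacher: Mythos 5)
Your sketch is correct, and it reconstructs precisely the argument of the source the paper cites for this lemma: the paper gives no proof of its own but quotes Keel--Tao \cite{KT98}, whose proof is exactly your route (dispersive decay plus $L^2$ conservation interpolated into the $TT^*$/Hardy--Littlewood--Sobolev scheme for $q>2$, Christ--Kiselev for the retarded non-endpoint estimates, and the bilinear dyadic decomposition with Lorentz-space real interpolation at the endpoint $(2,\tfrac{2d}{d-2})$, $d\geq 3$). No discrepancy to report.
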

	
	In the energy-critical case,	
	we will use the Schr\"odinger admissible pair 
	\begin{equation*}
	    (q,p)= \Big(\frac{2d}{d-2}, \frac{2d^2}{d^2-2d+4}\Big).
	\end{equation*} 
	For any time interval $I\subseteq \mathbb{R}$, 
	we define the spaces $S(I)$ and $S^1(I)$ as the closure of all tempered distributions with respect to the norms 
	\begin{align*}
	\|u\|_{S(I)} := \|u\|_{L^q(I; L^p( \R^d))},\quad \|u\|_{S^1(I)} := \|u\|_{L^q(I;  W^{1,p}(\R^d))},
	\end{align*}
	respectively.
	We further set 
	\begin{align*}
	\|u\|_{N(I)} := \|u\|_{L_t^{q'}(I; L_x^{p'}(\R^d))},  
	\end{align*}
	where $(q',p')=(\frac{2d}{d+2}, \frac{2d^2}{d^2+2d-4})$ 
	is the dual pair of $(q,p)$, and define the space $N(I)$ as the closure of the tempered distributions with respect to this norm.
	
    We record the estimates for the energy-critical nonlinearity we are going to use.	
	\begin{lemma}[Estimates for the nonlinearity] \label{Lem-Nonl-Energy} 
		Let $I \subseteq \R^+$ be an interval and $F(u):=|u|^{\frac{4}{d-2}}u$. 
		Then, one has 
		\begin{align}
			&\|F(u)\|_{N(I)} \lesssim \|u\|_{S(I)}\|\nabla u\|^{\frac{4}{d-2}}_{S(I)},  \label{prop:EnerNonStr}\\
			&\|F(u)-F(v)\|_{N(I)}\lesssim \big(\|\nabla u\|_{S(I)}^{\frac{4}{d-2}}+\|\nabla v\|_{S(I)}^{\frac{4}{d-2}}\big)\|u-v\|_{S(I)}, \label{prop:EnerNonDiff}\\
			&\|\nabla F(u)\|_{N(I)} \lesssim \|\nabla u\|^{1+\frac{4}{d-2}}_{S(I)}.  \label{prop:EnerNablaNonStr}
		\end{align}
	\end{lemma}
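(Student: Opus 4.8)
The plan is to prove all three estimates by the same two-step scheme: first apply Hölder's inequality in the spatial variable together with the Sobolev embedding to reduce everything to products of $\|u\|_{L^p_x}$ and $\|\nabla u\|_{L^p_x}$, and then apply Hölder's inequality in time. The key numerology is as follows. With $\frac1p = \frac{d^2-2d+4}{2d^2}$ the Sobolev embedding $W^{1,p}(\R^d)\hookrightarrow L^{p^*}(\R^d)$ holds for the exponent $\frac{1}{p^*}=\frac1p-\frac1d=\frac{(d-2)^2}{2d^2}$, i.e.\ $p^*=\frac{2d^2}{(d-2)^2}$. A direct computation then verifies the two exponent identities
\begin{align*}
\frac{1}{p'}=\frac1p+\frac{4}{d-2}\cdot\frac{1}{p^*},\qquad \frac{1}{q'}=\Big(1+\frac{4}{d-2}\Big)\frac1q,
\end{align*}
which are precisely what is needed to distribute one factor in $L^p_x$ (resp.\ $L^q_t$) and the remaining $\frac{4}{d-2}$ powers in $L^{p^*}_x$ (resp.\ $L^q_t$).

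The second ingredient consists of elementary pointwise bounds on the nonlinearity. Since $F(z)=|z|^{\frac{4}{d-2}}z$ is $C^1$ on $\C$ — away from the origin this is clear, and at the origin one checks $|DF(z)|\lesssim|z|^{\frac{4}{d-2}}\to 0$, so that the apparent singularity of the individual partial derivatives is harmless even in high dimensions where $\frac{4}{d-2}<1$ — I would use
\begin{align*}
|\nabla F(u)|\lesssim |u|^{\frac{4}{d-2}}|\nabla u|,\qquad |F(u)-F(v)|\lesssim\big(|u|^{\frac{4}{d-2}}+|v|^{\frac{4}{d-2}}\big)|u-v|,
\end{align*}
the latter obtained from the mean value inequality applied along the segment joining $v$ to $u$, using $|DF(w)|\lesssim|w|^{\frac{4}{d-2}}$.

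With these preparations each estimate is immediate. For~\eqref{prop:EnerNonStr} I would write $F(u)=|u|^{\frac{4}{d-2}}u$, apply Hölder in space via the first identity to get $\|F(u)\|_{L^{p'}_x}\lesssim\|u\|_{L^p_x}\|u\|_{L^{p^*}_x}^{\frac{4}{d-2}}$, invoke the Sobolev embedding to replace $\|u\|_{L^{p^*}_x}$ by $\|\nabla u\|_{L^p_x}$, and finally apply Hölder in time via the second identity with all factors placed in $L^q_t$. Estimate~\eqref{prop:EnerNablaNonStr} follows identically, starting from the pointwise gradient bound and placing $|\nabla u|$ in $L^p_x$; and~\eqref{prop:EnerNonDiff} follows the same way from the pointwise difference bound, placing $|u-v|$ in $L^p_x$ (hence in $S(I)$ after the time integration) and the $\frac{4}{d-2}$ powers of $u$ and $v$ in $L^{p^*}_x$. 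The only point requiring genuine care — and hence the main obstacle — is the exponent bookkeeping: one must verify that the \emph{single} admissible pair $(q,p)$ fixed in the paper simultaneously closes the spatial Hölder/Sobolev relation and the temporal Hölder relation, which is exactly the content of the two displayed identities. The differentiability of the fractional-power nonlinearity in dimensions $d\geq 7$ (where $\frac{4}{d-2}<1$ and $F$ is merely $C^1$, not $C^2$) is a secondary technical point, handled entirely by the pointwise gradient bound noted above.
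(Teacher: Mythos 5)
Your proposal is correct and takes essentially the same approach as the paper: the pointwise bounds $|F(u)-F(v)|\lesssim\big(|u|^{\frac{4}{d-2}}+|v|^{\frac{4}{d-2}}\big)|u-v|$ and $|\nabla F(u)|\lesssim|u|^{\frac{4}{d-2}}|\nabla u|$ from the $C^1$ bound $|DF(z)|\lesssim|z|^{\frac{4}{d-2}}$, followed by H\"older's inequality and Sobolev embedding. Your explicit verification of the exponent identities $\frac{1}{p'}=\frac{1}{p}+\frac{4}{d-2}\cdot\frac{1}{p^*}$ and $\frac{1}{q'}=\big(1+\frac{4}{d-2}\big)\frac{1}{q}$ (with the homogeneous embedding $\|u\|_{L^{p^*}_x}\lesssim\|\nabla u\|_{L^p_x}$, valid since $p<d$) simply fills in the bookkeeping the paper leaves implicit by citing Remark~2.3 of~\cite{KM06}.
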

	\begin{proof}
	    It is well known that using the complex derivatives $\partial_z = \frac{1}{2}(\partial_x - \imu \partial_y)$ and $\partial_{\overline{z}} = \frac{1}{2}(\partial_x + \imu \partial_y)$ one can easily show the pointwise estimates
	    \begin{equation}
	        \label{eq:DifferenceEstPointwise}
	        |F(u) - F(v)| \leq C (|u|^{\frac{4}{d-2}} + |v|^{\frac{4}{d-2}})|u-v|
	    \end{equation}
	    and $|\nabla F(u)| \leq C |u|^{\frac{4}{d-2}} |\nabla u|$, see e.g. Remark~2.3 in~\cite{KM06}. The estimates~\eqref{prop:EnerNonStr} to~\eqref{prop:EnerNablaNonStr} then follow from H{\"o}lder's inequality and Sobolev embedding.
	\end{proof}

	The following result treats the well-posedness of~\eqref{hNLS-Energy} 
	on small time intervals. 
	
	\begin{lemma} 
		\label{la:SmallIniEner}
		Let $T > 0$ and $u_0 \in H^1(\R^d)$. Consider the initial value problem 
		\begin{equation}\label{eq:GenEnerEq}
			\left\{\aligned
			\imu \partial_t u + \Delta u &= \lambda h |u|^{\frac{4}{d-2}}u\\
			u(0)&=u_0	
			\endaligned
			\right.
		\end{equation}
		on $I=[0,T]$, where $h \in L^\infty(I)$. Let $A > 0$ such that $\|h\|_{L^\infty(I)} \leq A$ and $\delta=\delta(A)>0$ be so small that 
		\begin{align}\label{eq:AssDeltaEner}
		     4C A (2\delta)^{\frac{4}{d-2}}\leq 1,
		\end{align}
		where $C$ is a large fixed generic constant. 
		Then, 
		for any initial datum $u_0$ 
		satisfying 
		\begin{align} \label{initial-small-H1}
			\|  e^{\imu t \Delta}u_0\|_{S^1(I)}\leq \delta,
		\end{align} 
		there exists a unique solution $u\in C([0, T], H^1(\R^d)) \cap S^1(I)$ to  $(\ref{eq:GenEnerEq})$. Moreover, the solution $u$ satisfies
		\begin{align*}
		    \|u\|_{C(I; H^1(\R^d))} \leq \| u_0 \|_{H^1(\R^d)} + 1.
		\end{align*}
	\end{lemma}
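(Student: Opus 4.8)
The plan is to recast \eqref{eq:GenEnerEq} as the fixed point problem $u=\Phi(u)$, where
\[
\Phi(u)(t) := e^{\imu t \Delta} u_0 - \imu \lambda \int_0^t e^{\imu (t-s)\Delta}\, h(s)\,|u(s)|^{\frac{4}{d-2}} u(s) \dd s ,
\]
and to solve it by a contraction argument on the closed ball $B := \{ u \in S^1(I) : \|u\|_{S^1(I)} \le 2\delta \}$. The defining feature of the energy-critical case is that the difference estimate \eqref{prop:EnerNonDiff} controls $F(u)-F(v)$ in $N(I)$ only by the weaker quantity $\|u-v\|_{S(I)}$, not by $\|u-v\|_{S^1(I)}$, so one cannot close the iteration in $S^1(I)$ directly. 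I would therefore equip $B$ with the metric $d(u,v) := \|u-v\|_{S(I)}$. Since the admissible exponents $(q,p)$ satisfy $1<q,p<\infty$, the space $S^1(I)=L^q(I;W^{1,p}(\R^d))$ is reflexive, its norm-closed balls are weakly compact and the norm is weakly lower semicontinuous; hence a $d$-Cauchy sequence in $B$ converges in $S(I)$ to a limit whose $S^1(I)$-norm is still at most $2\delta$, so $(B,d)$ is a complete metric space and the Banach fixed point theorem applies. Throughout, $C$ denotes the generic constant of \eqref{eq:AssDeltaEner}, taken large enough to dominate the Strichartz constant of Lemma~\ref{lem:StrichartzEstimates} and the nonlinearity constants of Lemma~\ref{Lem-Nonl-Energy}.

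For the self-mapping property, fix $u\in B$. Applying Lemma~\ref{lem:StrichartzEstimates} with the pair $(q,p)$ (and, after commuting $\nabla$ past the time-only factor $h$, to the $\nabla$-component), and then using $\|h\|_{L^\infty(I)}\le A$ together with \eqref{prop:EnerNonStr}, \eqref{prop:EnerNablaNonStr} and $\|u\|_{S(I)},\|\nabla u\|_{S(I)}\le\|u\|_{S^1(I)}\le 2\delta$, the inhomogeneous term of $\Phi(u)$ is bounded in $S^1(I)$ by $CA(2\delta)^{1+\frac{4}{d-2}}$. Combining this with the linear bound $\|e^{\imu t\Delta}u_0\|_{S^1(I)}\le\delta$ from \eqref{initial-small-H1} and invoking \eqref{eq:AssDeltaEner} gives
\[
\|\Phi(u)\|_{S^1(I)} \le \delta + CA(2\delta)^{1+\frac{4}{d-2}} = \delta + \tfrac12\big(4CA(2\delta)^{\frac{4}{d-2}}\big)\delta \le \delta + \tfrac{\delta}{2} = \tfrac{3\delta}{2} \le 2\delta ,
\]
so $\Phi(B)\subseteq B$. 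For the contraction, let $u,v\in B$; Lemma~\ref{lem:StrichartzEstimates} and \eqref{prop:EnerNonDiff} yield
\[
d(\Phi(u),\Phi(v)) \le CA\big(\|\nabla u\|_{S(I)}^{\frac{4}{d-2}}+\|\nabla v\|_{S(I)}^{\frac{4}{d-2}}\big)\|u-v\|_{S(I)} \le 2CA(2\delta)^{\frac{4}{d-2}}\, d(u,v) \le \tfrac12\, d(u,v),
\]
again by \eqref{eq:AssDeltaEner}. Hence $\Phi$ is a contraction on $(B,d)$ and admits a unique fixed point $u\in B$, which solves \eqref{eq:GenEnerEq}.

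It remains to establish the regularity, the asserted bound, and unconditional uniqueness. Applying Lemma~\ref{lem:StrichartzEstimates} now with the energy-admissible endpoint $(\infty,2)$ on the left-hand side, the inhomogeneous term of $\Phi(u)$ lies in $C(I;H^1(\R^d))$ with norm at most $CA(2\delta)^{1+\frac{4}{d-2}}\le \tfrac{\delta}{2}$, while $e^{\imu t\Delta}u_0\in C(I;H^1(\R^d))$ with norm $\|u_0\|_{H^1(\R^d)}$. Since \eqref{eq:AssDeltaEner} only bounds $\delta$ from above, we may in addition assume $\delta\le 1$, and therefore $u\in C(I;H^1(\R^d))\cap S^1(I)$ with $\|u\|_{C(I;H^1(\R^d))}\le \|u_0\|_{H^1(\R^d)}+1$. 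Finally, unconditional uniqueness in $C(I;H^1(\R^d))\cap S^1(I)$ follows by the standard continuation argument: two such solutions have finite $S^1(I)$-norm, hence $I$ can be subdivided into finitely many subintervals on each of which the $S(I)$-norm is small enough for the contraction estimate above to force agreement, and this identity propagates across $I$.

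The main obstacle is exactly the norm mismatch highlighted at the outset: because $\tfrac{4}{d-2}$ may be less than $1$, the nonlinearity is not Lipschitz at the $S^1(I)$-level, so the fixed point must be carried out in the weaker $S(I)$-metric on the $S^1(I)$-ball. The key technical point is thus the completeness of $(B,d)$, which rests on the reflexivity of $S^1(I)$ and weak lower semicontinuity of its norm; once this is in place, the remaining estimates are routine bookkeeping of Lemmas~\ref{lem:StrichartzEstimates} and~\ref{Lem-Nonl-Energy} calibrated to the smallness condition \eqref{eq:AssDeltaEner}.
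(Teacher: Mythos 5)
Your proposal is correct and follows essentially the same route as the paper: the same contraction mapping $\Phi$ on the ball $\{\|u\|_{S^1(I)}\le 2\delta\}$ equipped with the weaker metric $d(u,v)=\|u-v\|_{S(I)}$, with identical self-map, contraction, and $C(I;H^1)$ estimates calibrated to \eqref{eq:AssDeltaEner}; you in fact supply details the paper leaves implicit, namely the completeness of the ball under $d$ (via reflexivity and weak lower semicontinuity) and the subdivision argument for unconditional uniqueness. The only slip is cosmetic: in the uniqueness step the smallness needed on each subinterval is that of the $S^1$-norm (the gradient factor $\|\nabla u\|_{S}^{4/(d-2)}$ in \eqref{prop:EnerNonDiff}), not of the $S(I)$-norm, which is what the finiteness of $\|u\|_{S^1(I)}$ provides anyway.
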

	
	\begin{proof} 
		We consider the ball
		\begin{align*}
			B_1:=\{ u\in S^1(I): \|u\|_{S^1(I)}\leq 2\delta\}
		\end{align*}
		endowed with the metric $d(u,v):=\|u-v\|_{S(I)}$. Note that $B_1$ is closed and thus complete with this metric.
		Define the map $\Phi(u_0; \cdot)$ on $B_1$ by 
		\begin{align*}
		\Phi(u_0; u)(t) :=e^{\imu t\Delta}u_0 
		- \imu \lambda \int_0^t e^{\imu (t-s) \Delta} 
		(h |u|^{\frac{4}{d-2}}u) \dd s
		\end{align*}
		for any $u\in B_1$. 
		
		Then, by Strichartz estimates, Lemma \ref{Lem-Nonl-Energy},  
		\eqref{eq:AssDeltaEner} 
		and~\eqref{initial-small-H1}, 
		we have
		\begin{align} 
			\| \Phi(u_0; u)\|_{S^1(I)}
			&\leq \|e^{\imu t\Delta }u_0\|_{S^1(I)}+ C\|h\|_{L^\infty(I)}  \| u\|_{S^1(I)}^{1+\frac{4}{d-2}} \nonumber\\ 
			&\leq \delta+ C\|h\|_{L^\infty(I)}(2\delta)^{1+\frac{4}{d-2}} 
			\leq 2\delta, \label{eq:EnCrSelfMap}
		\end{align}
		as well as
		\begin{align*}
			\|\Phi(u_0; u)-\Phi(u_0;v)\|_{S(I)}
			&\leq C  \|h\|_{L^\infty(I)} (\|\nabla u\|_{S(I)}^{\frac{4}{d-2}} + \|\nabla v\|_{S(I)}^{\frac{4}{d-2}})\|u-v\|_{S(I)} \\ 
			&\leq 2C\|h\|_{L^\infty(I)}(2\delta)^{\frac{4}{d-2}}\|u-v\|_{S(I)} \leq \frac 12 \|u-v\|_{S(I)}.
		\end{align*}  
		Hence, $\Phi(u_0; \cdot)$ is a contractive self-map on $B_1$.  In particular,  
		$\Phi(u_0;\cdot)$ has a unique fixed point $u = \Phi(u_0; u)$ in $B_1$. Strichartz estimates also show as in~\eqref{eq:EnCrSelfMap} that $u \in C(I; H^1(\R^d))$.  
		It follows that $u$ solves \eqref{eq:GenEnerEq} on $[0,T]$. The uniqueness can be proved by similar arguments. Finally, using Strichartz estimates and arguing as above, we obtain
		\begin{align*}
		    \|\Phi(u_0; u)\|_{C(I;  H^1(\R^d))}
			&\leq \|u_0\|_{H^1(\R^d)}+C\|h\|_{L^\infty(I)}\|\langle \nabla\rangle (|u|^{\frac{4}{d-2}}u)\|_{N(I)}\nonumber\\
			&\leq \|u_0\|_{H^1(\R^d)}+C\|h\|_{L^\infty(I)}\| u\|_{S^1(I)}^{1+\frac{4}{d-2}}  \nonumber \\ 
			&\leq \|u_0\|_{H^1(\R^d)}+C\|h\|_{L^\infty(I)}(2\delta)^{1+\frac{4}{d-2}} \leq \|u_0\|_{H^1(\R^d)} + 1.
		\end{align*}
		Thus, the proof is complete.  
	\end{proof}

	The following lemma shows that equation~\eqref{hNLS-Energy} is globally well posed 
	if $h$ is sufficiently small. 
	
	\begin{lemma}
		\label{la:SmallPerturEner}
		Let $T > 0$ and $u_T \in H^1(\R^d)$. Consider the initial value problem 
		\begin{equation}\label{eq:GenEnerEqLT}
			\left\{\aligned
			\imu \partial_t u + \Delta u &= \lambda h |u|^{\frac{4}{d-2}}u\\
			u(T)&=u_T	
			\endaligned
			\right.
		\end{equation}
		on $J=[T, \infty)$, where $h \in L^\infty(J)$.
		Let $E>1$ be such that $\|u_T\|_{H^1(\R^d)}\leq  E$ and $\varepsilon=\varepsilon(E)>0$ be so small that 
		\begin{align}\label{eq:AssVarepsilonEner}
			4C\varepsilon (2CE)^{\frac{4}{d-2}}\leq 1,
		\end{align}
		where $C$ is a large fixed generic constant.
		Further assume that
		\begin{align}  \label{h-small-H1}
			\|h\|_{L^\infty(J)}\leq  \varepsilon. 
		\end{align}

		Then, there exists a unique solution 
		 $u\in C([T, \infty); H^1(\R^d)) \cap S^1([T,\infty))$ of~\eqref{eq:GenEnerEqLT}. Moreover, $u$ scatters at infinity, i.e., there exists $f_+ \in H^1(\R^d)$ such that  
		\begin{align}  \label{u-scatt-H1} 
			\lim_{t\to\infty} \big\|u(t)- e^{\imu t \Delta} f_+ \big\|_{H^1(\R^d)}=0.
		\end{align} 
	\end{lemma}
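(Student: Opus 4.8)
The plan is to run a global-in-time contraction mapping argument, entirely parallel to the proof of Lemma~\ref{la:SmallIniEner}, with the smallness now provided by the coefficient $h$ through~\eqref{h-small-H1} and~\eqref{eq:AssVarepsilonEner} rather than by the free profile. The decisive structural point is that the Strichartz constants in Lemma~\ref{lem:StrichartzEstimates} are independent of the interval, so the very same estimates that work locally close up on the unbounded interval $J=[T,\infty)$, with no separate global theory required.

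Concretely, I would let $C$ be a generic constant dominating all relevant Strichartz and nonlinearity constants, consider the ball
\[
B := \{ u \in S^1(J) : \|u\|_{S^1(J)} \leq 2CE \}
\]
equipped with the metric $d(u,v) = \|u-v\|_{S(J)}$ (closed, hence complete, exactly as in Lemma~\ref{la:SmallIniEner}), and define the Duhamel map
\[
\Phi(u)(t) := e^{\imu(t-T)\Delta} u_T - \imu\lambda \int_T^t e^{\imu(t-s)\Delta}\big(h |u|^{\frac{4}{d-2}}u\big)\dd s .
\]
Since $h$ depends only on $t$, it commutes with $\langle\nabla\rangle$ and factors out of the $N(J)$-norm, so Strichartz together with~\eqref{prop:EnerNonStr} and~\eqref{prop:EnerNablaNonStr} yields
\[
\|\Phi(u)\|_{S^1(J)} \leq CE + C\|h\|_{L^\infty(J)}\|u\|_{S^1(J)}^{1+\frac{4}{d-2}} \leq CE + C\varepsilon (2CE)^{1+\frac{4}{d-2}} \leq 2CE ,
\]
where the last step uses~\eqref{eq:AssVarepsilonEner}; the difference estimate~\eqref{prop:EnerNonDiff} gives in the same way $\|\Phi(u)-\Phi(v)\|_{S(J)} \leq 2C\varepsilon (2CE)^{\frac{4}{d-2}}\|u-v\|_{S(J)} \leq \tfrac12\|u-v\|_{S(J)}$. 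Thus $\Phi$ is a contractive self-map on $B$ and has a unique fixed point $u$ solving~\eqref{eq:GenEnerEqLT} on $J$; a further Strichartz estimate upgrades $u$ to $C(J;H^1(\R^d))$, and uniqueness in the full class follows as in Lemma~\ref{la:SmallIniEner}.

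For scattering I would apply $e^{-\imu t\Delta}$ to the Duhamel formula and set
\[
f_+ := e^{-\imu T\Delta} u_T - \imu\lambda \int_T^\infty e^{-\imu s\Delta}\big(h |u|^{\frac{4}{d-2}}u\big)\dd s ,
\]
the integral converging in $H^1(\R^d)$ by the dual Strichartz estimate and~\eqref{prop:EnerNonStr},~\eqref{prop:EnerNablaNonStr} since $\|u\|_{S^1(J)}<\infty$. Then
\[
\|e^{-\imu t\Delta}u(t) - f_+\|_{H^1(\R^d)} \leq C\|h\|_{L^\infty(J)}\|u\|_{S^1([t,\infty))}^{1+\frac{4}{d-2}} ,
\]
which gives~\eqref{u-scatt-H1} once one notes that the tail norm $\|u\|_{S^1([t,\infty))} \to 0$ as $t\to\infty$, by absolute continuity of the $S^1$-integral. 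The only genuine obstacle is securing \emph{global} control of the contraction, and this is dissolved precisely by the interval-uniform Strichartz constants combined with the smallness of $h$; one does not need an induction on spacetime norms or a concentration-compactness scheme, because $h$ is small throughout all of $J$.
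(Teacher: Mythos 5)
Your proposal is correct and follows essentially the same route as the paper: a global contraction on the ball $\{u \in S^1(J): \|u\|_{S^1(J)} \leq 2CE\}$ with the weaker metric $\|u-v\|_{S(J)}$, closing via the interval-independent Strichartz constants and the smallness of $h$ from~\eqref{h-small-H1} and~\eqref{eq:AssVarepsilonEner}. Your scattering argument (explicitly constructing $f_+$ from the Duhamel integral and bounding the tail by the vanishing of $\|u\|_{S^1([t,\infty))}$) is just a reformulation of the paper's Cauchy-criterion argument for $e^{-\imu t\Delta}u(t)$ and is equally valid.
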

	
	\begin{proof}
		We consider the ball
		\begin{align*}
			B_2:=\{ u\in S^1(I): \|u\|_{S^1(I)} \leq 2CE\}
		\end{align*}
		equipped with the metric $d(u,v):=\|u-v\|_{S(I)}$. Note that $B_2$ is closed and thus complete with this metric.
		Define the map $\Phi(u_T; \cdot)$ on $B_2$ by  
		\begin{align*}
		\Phi(u_T;u)(t):=e^{\imu (t-T) \Delta}u_T 
		- \imu \lambda  \int_{T}^t e^{\imu (t-s)\Delta} (h|u|^{\frac{4}{d-2}}u)\dd s. 
		\end{align*}
		
		Then, 
		for any $u,v \in B_2$, Strichartz estimates, Lemma \ref{Lem-Nonl-Energy}, \eqref{eq:AssVarepsilonEner}, and~\eqref{h-small-H1} imply
		\begin{align*}
			\|\Phi(u_T;u)\|_{C(J; H^1(\R^d))} + \| \Phi(u_T;u)\|_{S^1(J)}
			&\leq
		   C\|u_T\|_{H^1(\R^d)} + C\|h\|_{L^\infty(J)}  \| u\|_{S^1(J)}^{1+\frac{4}{d-2}} \nonumber \\ 
			&\leq CE +C\varepsilon(2CE)^{1+\frac{4}{d-2}}
			\leq 2 CE,
		\end{align*} 
		and
		\begin{align*}
		    \|\Phi(u_T;u)-\Phi(u_T;v)\|_{S(J)}
			&\leq
			C\|h\|_{L^\infty(J)} (\|\nabla u\|_{S(J)}^{\frac{4}{d-2}} + \|\nabla v\|_{S(J)}^{\frac{4}{d-2}})\|u-v\|_{S(J)}  \nonumber \\ 
			&\leq 2C\varepsilon(2CE)^{\frac{4}{d-2}}\|u-v\|_{S(J)} \leq \frac 12 \|u-v\|_{S(J)}.
		\end{align*}
		
		It follows that $\Phi(u_T; \cdot)$ has a unique fixed point $u$ in $B_2$, 
		which solves~\eqref{eq:GenEnerEqLT}  on $[T,\infty)$ and is contained in $C(J; H^1(\R^d))$. By standard arguments one also obtains that $u$ is unique in $S^1(J)$.

		It remains to show the scattering property. 
		To this end, employing Strichartz estimates and Lemma~\ref{Lem-Nonl-Energy} again,
		we derive that for any $t_2 > t_1 \geq T$, 
		\begin{align*}
			\big\| e^{-\imu t_1\Delta}u(t_1)-e^{-\imu t_2\Delta}u(t_2)\big\|_{H^1(\R^d)} 
			&= \Big\| \int_{t_1}^{t_2} e^{-\imu s\Delta} (h|u|^{\frac{4}{d-2}}u)\Big\|_{H^1(\R^d)}\\
			&\leq \|h\|_{L^\infty([t_1,t_2])}\|u\|_{S([t_1,t_2])}\|u\|_{S^1([t_1,t_2])}^{\frac{4}{d-2}}\longrightarrow 0, 
		\end{align*} 
		as $t_1, t_2 \rightarrow \infty$ since $\|u\|_{S^1(J)}<\infty$. 
		Hence, 
		we infer that 
		$e^{-\imu t \Delta} u(t)$ converges in $H^1(\R^d)$ as $t \rightarrow \infty$
		and obtain the scattering behavior \eqref{u-scatt-H1} of the solution.   
	\end{proof}

	We are now ready to prove Theorem \ref{thm:RegNoiseEnerCrit}. 
	
	\medskip 
	{\bf Proof of Theorem $\ref{thm:RegNoiseEnerCrit}$.}  
	As mentioned above, using the rescaling transform one only needs to prove the corresponding assertion for the rescaled random NLS~\eqref{hNLS-Energy} in the energy-critical case. 
	
	For this purpose, 
	in view of the law of the iterated logarithm~\eqref{eq:LawItLog} and the continuity of Brownian motions, we infer that  $h := h_{e_1}$, where $e_1$ is the first unit vector in $\ell^2$, is bounded on $[0,\infty)$,  $\PP$-a.s., 
	i.e.,
	\begin{align*}
		\PP\Big(\bigcup_{n \in \N} \{\|h\|_{L^\infty([0,\infty))} \leq n\} \Big) = 1.
	\end{align*}
	In particular, for any $\eta>0$, there exists $A > 0$ such that
	\begin{align*}
		&\PP(\{\omega \in \Omega \colon \|h_c(\omega)\|_{L^\infty([0,\infty))} \leq A\}) \\
		&=\PP(\{\omega \in \Omega \colon \|h(\omega)\|_{L^\infty([0,\infty))} \leq A\})
		\geq 1 - \frac{\eta}{2}
	\end{align*} 
	for every $c \in \ell^2 \setminus\{0\}$, where the first identity 
	follows from the self-similarity 
	of Brownian motions as in~\eqref{h-prob}. 
	Now let $\delta = \delta(A)$ be as in~\eqref{eq:AssDeltaEner} 
	and $E>1$ be such that 
	$\|X_0\|_{H^1(\R^d)} \leq E - 1$. 
	In view of 	Lemma~\ref{lem:GeomBrDec}, 
	we infer that there exists $c_0 > 0$ such that 
	for any $\|c\| \geq c_0$, 
	\begin{align*}
		\PP
		\bigg(\bigg\{\omega \in \Omega \colon \|h_c(\omega)\|_{L^\infty([\|c\|^{-1},\infty))} > (4C (2CE)^{\frac{4}{d-2}})^{-1}
		\bigg\}
		\bigg) \leq \frac{\eta}{2},  
	\end{align*}   
	where $C$ is the maximum of the generic constants from Lemmas~\ref{la:SmallIniEner} and~\ref{la:SmallPerturEner}. 
	Hence, the event
	\begin{align*}
		\Omega_c := &\{\omega \in \Omega \colon \|h_c(\omega)\|_{L^\infty([0,\infty))} \leq A\} 
		 \\
		&\cap \{\omega \in \Omega \colon 
		4C (2CE)^{\frac{4}{d-2}} \|h_c(\omega)\|_{L^\infty([\|c\|^{-1},\infty))}  \leq 1 \}
	\end{align*}
	has the high probability
	\begin{align*}
		\PP(\Omega_c) \geq 1 - \eta
	\end{align*}
	for all $\|c\| \geq c_0$. 
	
	Furthermore, by Strichartz estimates and the dominated convergence theorem, 
	we have
	\begin{align*}
		\| e^{\imu t \Delta} X_0 \|_{S^1([0,\|c\|^{-1}]} \longrightarrow 0, 
		\quad {\rm as}\ 
		\|c\| \rightarrow \infty.
	\end{align*} 
	This yields that for any $\delta > 0$ so small that 
	\begin{align*} 
		4CA(2\delta)^{\frac{4}{d-2}} \leq 1, 
	\end{align*}  
	there exists $c_0'>0$ such that for any $\|c\| \geq c'_0$ 
	\begin{align*}
		\| e^{\imu t \Delta} X_0 \|_{S^1([0,\|c\|^{-1}])} \leq \delta. 
	\end{align*} 
	
	Consequently, for any $\|c\| \geq \max\{c_0, c_0'\}$ and any $\omega \in \Omega_c$,  
	the conditions of Lemma~\ref{la:SmallIniEner} are satisfied on the interval $[0,\|c\|^{-1}]$ with $h =  h_c(\omega)$. We thus obtain a solution $u_1 \in C([0,\|c\|^{-1}]; H^1(\R^d))$ of~\eqref{hNLS-Energy} with initial value $X_0$ at time~$0$ which satisfies $\|u_1(\|c\|^{-1})\|_{H^1(\R^d)} \leq \|X_0\|_{H^1(\R^d)} + 1 \leq E$. Hence, for any $\omega \in \Omega_c$, the assumptions of Lemma~\ref{la:SmallPerturEner} are satisfied on the interval $[\|c\|^{-1},\infty)$ with $h = h_c(\omega)$ and initial value $u_1(\|c\|^{-1})$. This lemma thus yields a solution $u_2\in C([\|c\|^{-1}, \infty); H^1(\R^d))$.
	Gluing these two solutions together by setting 
	\begin{align*}
	    u(t):=u_1(t)\one_{[0,\|c\|^{-1}]}(t) + u_2(t) \one_{(\|c\|^{-1},\infty)}(t),
	\end{align*} 
	we obtain that $u$ solves~\eqref{hNLS-Energy} on $[0,\infty)$, 	$u\in C([0,\infty); H^1(\R^d))$, and 
	$u$ scatters at infinity. The uniqueness properties of $u_1$ and $u_2$ imply uniqueness of $u$ in $S^1([0,\infty))$.
	Therefore, the proof of Theorem \ref{thm:RegNoiseEnerCrit} is complete. 
	\hfill $\square$

	\section{The mass-critical case} \label{Sec-Mass}
	
	Now, we turn to the 
	SNLS \eqref{eq:StochasticNLS}  
	in the mass-critical case $\alpha =1+4/d$ with $d\geq 1$. 
	As in Section \ref{Sec-Energy}, it suffices to consider the rescaled random NLS. 
	
	\begin{lemma}
		\label{lem:MassCritDetPart}
		Let $u_0 \in L^2(\R^d)$ and $h \in L^\infty([0,\infty))$. Take $A, M> 0$ such that $\|u_0\|_{L^2(\R^d)} \leq M$ and $\|h\|_{L^\infty([0,\infty))} \leq A$. Let $\delta = \delta(A) > 0$ 
		be so small that
		\begin{equation}
			\label{eq:Assdelta}
			2^{2 + \frac{4}{d}} C A \delta^{\frac{4}{d}} \leq 1,
		\end{equation}
		where $C$ is a fixed large generic constant. 
		Assume that there exists $T \in (0,\infty)$ such that
		\begin{align}  \label{initial-small-L2}
			\|e^{\imu t \Delta} u_0\|_{L^{\frac{2(d+2)}{d}}([0,T] \times \R^d)} \leq \delta
		\end{align} 
		and 
		\begin{align} \label{h-bdd-L2} 
			(2C+1)^{1 + \frac{4}{d}} 2C \|h\|_{L^\infty([T,\infty))} M^{\frac{4}{d}} \leq 1.
		\end{align} 
		Then there exists a unique solution $u \in C([0,\infty);L^2(\R^d)) \cap L^{\frac{2(d+2)}{d}}([0,\infty) \times \R^d)$ to 
		the initial value problem 
		\begin{equation}\label{hNLS-Mass}
			\left\{\aligned
			\imu \partial_t u + \Delta u &= \lambda h |u|^{\frac{4}{d}}u, \\
			u(0)&=u_0, 		
			\endaligned
			\right.
		\end{equation}  
		and $u$ scatters as $t \rightarrow \infty$, i.e., there exists $u_+ \in L^2(\R^d)$ such that
		\begin{align*}
			\| u(t) - e^{\imu t \Delta} u_+ \|_{L^2(\R^d)} \longrightarrow 0
		\end{align*}
		as $t \rightarrow \infty$.
	\end{lemma}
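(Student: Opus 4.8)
The plan is to reproduce the two-step scheme from the proof of Theorem~\ref{thm:RegNoiseEnerCrit}, now carried out entirely at the scaling-critical $L^2$ level. Since the pair $(\frac{2(d+2)}{d},\frac{2(d+2)}{d})$ is Schr\"odinger admissible, Lemma~\ref{lem:StrichartzEstimates} applies; I write $S(I)$ for the mass-critical Strichartz space $L^{\frac{2(d+2)}{d}}(I\times\R^d)$ throughout this step. Its dual exponent is $\frac{2(d+2)}{d+4}$, in which the nonlinearity is controlled by H\"older's inequality,
\begin{align*}
\|h|u|^{\frac4d}u\|_{L^{\frac{2(d+2)}{d+4}}(I\times\R^d)}\le\|h\|_{L^\infty(I)}\|u\|_{S(I)}^{1+\frac4d},
\end{align*}
together with the mass-critical analogue of the pointwise bound~\eqref{eq:DifferenceEstPointwise}, namely $|F(u)-F(v)|\lesssim(|u|^{\frac4d}+|v|^{\frac4d})|u-v|$ for $F(u)=|u|^{\frac4d}u$, which yields the matching difference estimate after H\"older.

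First I would solve on $[0,T]$ by a contraction for $\Phi(u)(t)=e^{\imu t\Delta}u_0-\imu\lambda\int_0^t e^{\imu(t-s)\Delta}(h|u|^{\frac4d}u)\dd s$ on the ball $\{\|u\|_{S([0,T])}\le2\delta\}$ with metric $d(u,v)=\|u-v\|_{S([0,T])}$, exactly as in Lemma~\ref{la:SmallIniEner}. By Strichartz, the smallness hypothesis~\eqref{initial-small-L2}, and $\|h\|_{L^\infty}\le A$, one finds $\|\Phi(u)\|_{S([0,T])}\le\delta+CA(2\delta)^{1+\frac4d}$ and a Lipschitz constant bounded by $2CA(2\delta)^{\frac4d}$; assumption~\eqref{eq:Assdelta} makes the latter $\le\frac12$ and simultaneously secures the self-map property, yielding a unique fixed point $u_1\in C([0,T];L^2(\R^d))\cap S([0,T])$ with $\|u_1\|_{S([0,T])}\le2\delta$, the continuity in $L^2$ again coming from Strichartz. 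The key structural fact I would then invoke is that~\eqref{hNLS-Mass} conserves mass: because $h$ and $\lambda$ are real, $\frac{d}{dt}\|u\|_2^2=0$ formally, made rigorous for $L^2$ data by approximation, so that $\|u_1(T)\|_2=\|u_0\|_2\le M$. This is precisely the bound needed to feed the given constant $M$ into the second step.

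On $[T,\infty)$ I would run a second contraction for $\Psi(u)(t)=e^{\imu(t-T)\Delta}u_1(T)-\imu\lambda\int_T^t e^{\imu(t-s)\Delta}(h|u|^{\frac4d}u)\dd s$ on the ball $\{\|u\|_{S([T,\infty))}\le(2C+1)M\}$, in the spirit of Lemma~\ref{la:SmallPerturEner}. The \emph{decisive} difference from the first step is that the free evolution is now genuinely large: Strichartz only gives $\|e^{\imu(t-T)\Delta}u_1(T)\|_{S([T,\infty))}\le CM$, which is not small. The contraction therefore cannot close through smallness of the linear profile, but only through the smallness of the factor $h$ on $[T,\infty)$. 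Reading~\eqref{h-bdd-L2} as $2C(2C+1)^{1+\frac4d}\|h\|_{L^\infty([T,\infty))}M^{\frac4d}\le1$, one checks that the nonlinear Duhamel term is bounded by $\frac12 M$, so that the total is at most $CM+\frac12 M\le(2C+1)M$, and that the Lipschitz constant is at most $\frac{1}{2C+1}<\frac12$; hence $\Psi$ is a contraction and produces $u_2\in C([T,\infty);L^2(\R^d))\cap S([T,\infty))$.

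Gluing $u:=u_1$ on $[0,T]$ and $u:=u_2$ on $[T,\infty)$ gives the desired solution on $[0,\infty)$ with $\|u\|_{L^{\frac{2(d+2)}{d}}([0,\infty)\times\R^d)}<\infty$, and uniqueness in $C([0,\infty);L^2)\cap L^{\frac{2(d+2)}{d}}$ follows from the uniqueness in each step by continuation. For scattering I would use the $L^\infty_t L^2_x$ endpoint of Lemma~\ref{lem:StrichartzEstimates}: for $t_2>t_1\ge T$,
\begin{align*}
\|e^{-\imu t_1\Delta}u(t_1)-e^{-\imu t_2\Delta}u(t_2)\|_{L^2(\R^d)}\le A\,\|u\|_{S([t_1,t_2])}^{1+\frac4d}\longrightarrow0
\end{align*}
as $t_1,t_2\to\infty$, since $\|u\|_{S([0,\infty))}<\infty$ forces its tail to vanish; thus $e^{-\imu t\Delta}u(t)$ converges in $L^2$ to some $u_+$. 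The main obstacle is conceptual rather than computational: unlike in the subcritical case one cannot extract smallness by placing $h$ into a finite $L^\theta_t$ norm, so on $[T,\infty)$ one is forced to build a \emph{global} solution for potentially large $L^2$ data, which normally lies far beyond a contraction argument. The scheme closes only because the critical nonlinearity carries the coefficient $h$, whose sup-norm on $[T,\infty)$ is made small by Lemma~\ref{lem:GeomBrDec}; correctly balancing the radius $(2C+1)M$ against the constants in~\eqref{eq:Assdelta} and~\eqref{h-bdd-L2} so that both contractions close, and using mass conservation to transmit the bound $M$ across the junction at $T$, is the heart of the argument.
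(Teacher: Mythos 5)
Your proposal is correct and follows the paper's proof almost verbatim: the same two contractions (on the ball of radius $2\delta$ in $S([0,T])$ using~\eqref{initial-small-L2} and~\eqref{eq:Assdelta}, then on the ball of radius $(2C+1)M$ in $S([T,\infty))$ using~\eqref{h-bdd-L2}), the same gluing, and the same Cauchy-sequence argument for scattering. The one genuine deviation is at the junction $t=T$: you transmit the bound $M$ across it via mass conservation, arguing that $h$ and $\lambda$ are real so $\|u_1(T)\|_2=\|u_0\|_2\le M$, ``made rigorous by approximation.'' The paper does this more cheaply: it simply runs Strichartz on the Duhamel formula once more to get $\|u(T)\|_2\le \|u_0\|_2+C\|h|u|^{4/d}u\|_{N([0,T])}\le M+\delta\le 2M$ (assuming w.l.o.g.\ $\delta\le M$), and its constant in~\eqref{h-bdd-L2} is calibrated exactly so that the second contraction closes with the weaker bound $2M$, via $2CM+\tfrac{M}{2}\le(2C+1)M$. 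Your sharper bound $M$ also closes the second contraction, as your computation of the radius and of the Lipschitz constant $\le\frac{1}{2C+1}$ shows, so the argument is sound; but note what the conservation-law route costs you. First, the lemma as stated only assumes $h\in L^\infty([0,\infty))$ and never asserts $h$ is real-valued (that holds in the application, where $h=h_c$ is a geometric Brownian motion, but not in the statement you are proving), whereas the Strichartz bound is insensitive to the phase of $h$. Second, rigorizing $\frac{d}{dt}\|u\|_2^2=0$ for $L^2$ data at the mass-critical regularity with a coefficient that is merely $L^\infty$ in time is not a one-liner: it needs persistence of $H^1$ regularity (or some smoothing of the data) plus continuous dependence from the fixed-point argument, an auxiliary layer the paper's one-line estimate avoids entirely. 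So: same scheme, one step replaced by a heavier tool that works but is strictly unnecessary and slightly overreaches the hypotheses.
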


	\begin{proof}
		We solve~\eqref{hNLS-Mass} in the following two-step procedure, where we first solve the problem in the small-time regime $I:=[0,T]$ and then in the large-time regime $J:=[T,\infty)$.
		
		\medskip 
		{\bf $(i)$ The small-time regime:} 
		We consider the operator 
		\begin{align*}
			\Phi(u)(t) := e^{\imu t \Delta} u_0 - \imu \lambda \int_0^t e^{\imu(t-s)\Delta} (h |u|^{\frac{4}{d}} u)(s) \dd s, 
			\quad t\in I,
		\end{align*} 
		on the closed ball 
		\begin{align*}
			B_{2\delta}(I) := \{u \in L^{\frac{2(d+2)}{d}}(I \times \R^d) \colon \|u\|_{L^{\frac{2(d+2)}{d}}(I \times \R^d)} \leq 2\delta \}.
		\end{align*} 
		Note that the pair $(q,p)$ with $q = p = \frac{2(d+2)}{d}$ is Schr{\"o}dinger admissible with $q' = p' = \frac{2(d+2)}{d+4}$.
		Let 
		\begin{align*}
			S(I) := L^{\frac{2(d+2)}{d}}(I \times \R^d) \qquad \text{and} \qquad 
			N(I) := L^{\frac{2(d+2)}{d+4}}(I \times \R^d). 
		\end{align*}

		An application of Strichartz estimates in Lemma \ref{lem:StrichartzEstimates}  yields 
		that for any $u\in B_{2\delta}(I)$, 
		\begin{align}
			\| \Phi(u) \|_{S(I)} 
			&\leq \| e^{\imu t \Delta} u_0  \|_{S(I)} + C \| h |u|^{\frac{4}{d}} u  \|_{N(I)}
			\leq \delta + C \|h\|_{L^\infty(I)} \| u \|_{S(I)}^{\frac{d+4}{d}}  \nonumber \\ 
			& \leq \delta + C A (2 \delta)^{1 + \frac{4}{d}} \leq 2 \delta,  \label{eq:EstSelfMapMassCrit}
		\end{align} 
		where the last step follows from the smallness condition~\eqref{eq:Assdelta} for $\delta$. 
		Moreover, for any $u,v\in B_{2\delta}(I)$, we have 
		\begin{align}
			\| \Phi(u) - \Phi(v)  \|_{S(I)} &\leq C \| h |u|^{\frac{4}{d}} u - h |v|^{\frac{4}{d}} v  \|_{N(I)}  \nonumber \\ 
			&\leq C \| h \|_{L^\infty(I)} (\| u  \|_{S(I)}^{\frac{4}{d}} + \| v  \|_{S(I)}^{\frac{4}{d}} ) \| u - v  \|_{S(I)} \nonumber\\
			&\leq C A \cdot 2 (2\delta)^{\frac{4}{d}} \| u - v \|_{S(I)} \leq \frac{1}{2} \| u - v \|_{S(I)}, \label{eq:EstContractionMassCrit}
		\end{align}
		where we used a difference estimate analogous to~\eqref{eq:DifferenceEstPointwise} and H\"older's inequality for the second estimate. 
		It follows that $\Phi$ is a contractive self-mapping on $B_{2\delta}(I)$   
		and thus has a unique fixed point $u \in B_{2\delta}(I)$ which solves~\eqref{hNLS-Mass}.
		Another application of Strichartz estimates also shows that  
		$u$ belongs to $C(I;  L^2(\R^d))$. Uniqueness of $u$ in $C(I; L^2(\R^d)) \cap S(I)$ follows from standard arguments.
		
		Finally, using Strichartz estimates and estimating as in~\eqref{eq:EstSelfMapMassCrit}, we infer that 
		\begin{equation}
			\label{eq:EstIniValStep2}
			\| u(T) \|_{L^2(\R^d)} \leq \| u \|_{L^\infty(I;  L^2(\R^d))} \leq \| u_0 \|_{L^2(\R^d)} + C \| h |u|^{\frac{4}{d}} u  \|_{N(I)} \leq M + \delta \leq 2 M,
		\end{equation}
		where we assumed without loss of generality $\delta \leq M$.

		\medskip 
		{\bf $(ii)$ The large-time regime:}  
		Now we construct a solution of~\eqref{hNLS-Mass} on $J= [T, \infty)$ with the initial value $u(T)$ at $T$. To that purpose, we consider the  operator 
		\begin{align*}
			\Psi(v)(t) = e^{\imu (t - T) \Delta} u(T) - \imu \lambda \int_{T}^t e^{\imu (t-s) \Delta} (h |v|^{\frac{4}{d}} v)(s) \dd s, 
			\quad 
			t\geq T, 
		\end{align*}
		on the closed ball 
		\begin{align*}
			B_{M}(J)
			:= \{u \in L^{\frac{2(d+2)}{d}}(J \times \R^d) \colon \|u\|_{L^{\frac{2(d+2)}{d}}(J \times \R^d)} \leq (2C+1)M\}. 
		\end{align*} 
		
		Using Strichartz estimates and~\eqref{eq:EstIniValStep2}, we infer that 
		\begin{align*}
			\| \Psi(v) \|_{S(J)} &\leq \| e^{\imu (t-T) \Delta} u(T)  \|_{S(J)} + C \| h |v|^{\frac{4}{d}} v  \|_{N(J)} \\
			&\leq 
			C \|u(T)\|_{L^2(\R^d)} 
			+ C \|h\|_{L^\infty(J)} \| v \|_{S(J)}^{\frac{d+4}{d}} \\ 
			&\leq 2M C + C \|h\|_{L^\infty(J)} [(2C+1)M]^{1 + \frac{4}{d}}
			\leq (2C+1)M \label{eq:EstSelfMapMassCrit}, 
		\end{align*} 
		where the last step is due to \eqref{h-bdd-L2}. 
		Moreover, 
		for any $v,w \in B_{M}(J)$,
		\begin{align*}
			\| \Psi(v) - \Psi(w)  \|_{S(J)} 
			&\leq C \| h \|_{L^\infty(J)} (\| v  \|_{S(J)}^{\frac{4}{d}} + \| w  \|_{S(J)}^{\frac{4}{d}} ) \| v - w  \|_{S(J)} \nonumber\\
			&\leq C \| h \|_{L^\infty(J)} \cdot 2 [(2C+1)M]^{\frac{4}{d}} \| v - w \|_{S(J)} \leq \frac{1}{2} \| v - w \|_{S(J)}. 
		\end{align*}
		
		Consequently, $\Psi$ is a contractive self-map on $B_{M}(J)$ so that
		there exists a unique fixed point $v$ in $B_{M}(J)$ which solves~\eqref{hNLS-Mass}.
		As in step $(i)$, we conclude that $v$ is the unique solution to \eqref{hNLS-Mass} in $C(J; L^2(\R^d)) \cap S(J)$.
		
		\medskip 
		{\bf $(iii)$}  
		Finally, we glue the two solutions together by setting 
		\begin{align*}
			w(t) = \begin{cases}
				u(t), \quad &t \in [0,T], \\
				v(t), &t \in [T,\infty).
			\end{cases}
		\end{align*}
		Then, $w \in C([0,\infty);  L^2(\R^d)) \cap L^{\frac{2(d+2)}{d}}([0,\infty) \times \R^d)$ and it uniquely solves~\eqref{hNLS-Mass} on $[0,\infty)$. 
		
		Regarding the scattering property, 
		we note that for any $0 < t_1 < t_2<\infty$, 
		\begin{align*}
			\| e^{-\imu t_2 \Delta} w(t_2) - e^{-\imu t_1 \Delta} w(t_1) \|_{L^2(\mathbb{R}^d)} &\leq C \| h |w|^{\frac{4}{d}} w \|_{N(t_1, t_2)} \\
			&\leq C A \| w \|_{L^{\frac{2(d+2)}{d}}((t_1, t_2) \times \R^d)} \longrightarrow 0
		\end{align*}
		as $t_1, t_2 \rightarrow \infty$,  
		since $w \in L^{\frac{2(d+2)}{d}}([0,\infty) \times \R^d)$. 
		This yields the existence of the limit 
		$w_+ := \lim_{t \rightarrow \infty} e^{-\imu t \Delta} w(t)$ in $L^2(\R^d)$,  
		thereby finishing the proof. 
	\end{proof}

	We can now prove the regularization effect of noise on scattering for the stochastic mass-critical NLS.
	
	\medskip 
	{\bf Proof of Theorem~\ref{thm:RegNoiseMassCrit}:} 
	The proof proceeds in an analogous manner as the proof of Theorem~\ref{thm:RegNoiseEnerCrit}.  
	In fact, by virtue of Lemma~\ref{lem:GeomBrDec}, we infer that for any $\eta\in (0,1)$ there exists $A > 0$ and $c_0 > 0$ large enough such that for any  $\|c\| \geq c_0$, the event 
	\begin{align*} 
		\Omega_{c}:=\bigg\{\omega\in \Omega: \|h_c(\omega)\|_{L^\infty([0, \infty))}\leq A, \quad (2C+1)^{1 + \frac{4}{d}} 2C \|h\|_{L^\infty([\|c\|^{-1},\infty))} M^{\frac{4}{d}}\leq 1 
		 \bigg\} 
	\end{align*}  
	has the high probability 
	\begin{align*}  
		\P (\Omega_{c}) \geq 1-\eta. 
	\end{align*} 
	Moreover, for $\delta=\delta(A) > 0$ satisfying~\eqref{eq:Assdelta}, 
	we can take $\|c\|$ possibly larger such that 
	\begin{align*}
		\| e^{\imu t \Delta}X_0\|_{S^1([0,\|c\|^{-1}])} \leq \delta. 
	\end{align*}
	
	Therefore, by virtue of Lemma~\ref{lem:MassCritDetPart},   
	we infer that there exists $c_1 > 0$ such that for any $\|c\| \geq c_1$ and any $\omega \in \Omega_{c}$, the rescaled random NLS \eqref{hNLS-Mass} has a unique solution $u$ on $[0,\infty)$ scattering at infinity.  
	\hfill \qed

	\section*{Acknowledgements} 
	Funded by the Deutsche Forschungsgemeinschaft (DFG, German Research Foundation) -- Project-ID 317210226 -- SFB 1283. 
	D.\ Zhang is also grateful for the NSFC grants (No. 12271352, 12322108) 
	and Shanghai Frontiers Science Center of Modern Analysis.

	
	\bibliographystyle{abbrv}
	\bibliography{main}
	
\end{document}